\newtheorem{theorem}{Theorem}
\newtheorem{lemma}[theorem]{Lemma}
\newtheorem{conjecture}[theorem]{Conjecture}
\newtheorem{claim}[theorem]{Claim}
\DeclareMathOperator{\bip}{bip}
\DeclareMathOperator{\Obs}{Obs}
\DeclareMathOperator{\Int}{in}
\DeclareMathOperator{\Ext}{ex}
\newcommand{\DeltaH}{{\Delta_H}}
\newcommand{\DeltaG}{{\Delta_G}}
\newcommand{\sigmaG}{{\sigma_G}}
\newcommand*{\myproofname}{Proof}
\newenvironment{claimproof}[1][\myproofname]{\begin{proof}[#1]}{\end{proof}}
\title{Strong cliques and forbidden cycles}
\author{
Wouter Cames van Batenburg
\thanks{Department of Computer Science, Universit\'e Libre de Bruxelles, Belgium. 
Email: \protect\href{mailto:wcamesva@ulb.ac.be}{\protect\nolinkurl{wcamesva@ulb.ac.be}}. Supported by the Netherlands Organisation for Scientific Research (NWO), grant no.~613.001.217, and an ARC grant from the Wallonia-Brussels Federation of Belgium.}
\and
Ross J. Kang
\thanks{Department of Mathematics, Radboud University Nijmegen, Netherlands. 
Email: \protect\href{mailto:ross.kang@gmail.com}{\protect\nolinkurl{ross.kang@gmail.com}}. Supported by a Vidi grant (639.032.614) of the Netherlands Organisation for Scientific Research (NWO).}
\and
Fran\c{c}ois Pirot
\thanks{Department of Mathematics, Radboud University Nijmegen, Netherlands
and LORIA, Universit\'e de Lorraine, Nancy, France.
Email: \protect\href{mailto:francois.pirot@loria.fr}{\protect\nolinkurl{francois.pirot@loria.fr}}.}
}
\begin{document}

\maketitle

\begin{abstract}
Given a graph $G$, the {\em strong clique number} $\omega_2'(G)$ of $G$ is the cardinality of a largest collection of edges every pair of which are incident or connected by an edge in $G$.
We study the strong clique number of graphs missing some set of cycle lengths.
For a graph $G$ of large enough maximum degree $\Delta$, we show among other results the following:
$\omega_2'(G)\le5\Delta^2/4$ if $G$ is triangle-free;
$\omega_2'(G)\le3(\Delta-1)$ if $G$ is $C_4$-free; 
$\omega_2'(G)\le\Delta^2$ if $G$ is $C_{2k+1}$-free for some $k\ge 2$.
These bounds are attained by natural extremal examples. Our work extends and improves upon previous work of Faudree, Gy\'arf\'as, Schelp and Tuza (1990), Mahdian (2000) and Faron and Postle (2019).
We are motivated by the corresponding problems for the strong chromatic index.

\smallskip
{\bf Keywords}: strong clique number, strong chromatic index, forbidden cycles.
\end{abstract}

\section{Introduction}\label{sec:intro}

A {\em strong edge-colouring} of a graph $G$ is a partition of the edges $E(G)$ into parts each of which induces a matching; the {\em strong chromatic index} $\chi_2'(G)$ of $G$ is the least number of parts in such a partition. Although easily defined, $\chi_2'$ has proven very difficult to analyse: a conjecture of Erd\H{o}s and Ne\v{s}et\v{r}il~\cite{Erd88} from the 1980s is notorious. ($\DeltaG$ denotes the maximum degree of $G$.)

\begin{conjecture}[\cite{Erd88}]\label{conj:ErNe}
For a graph $G$ with $\DeltaG=\Delta$, $\chi_2'(G) \le \frac54\Delta^2$.
\end{conjecture}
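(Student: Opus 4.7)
The statement is the Erd\H{o}s--Ne\v{s}et\v{r}il conjecture, which has been open since the 1980s and is widely considered to be beyond current techniques; accordingly what follows is a research programme rather than a proof sketch I would expect to complete. A useful first calibration is the standard extremal example: the blow-up of $C_5$ in which each vertex is replaced by an independent set of size $\lfloor \Delta/2\rfloor$ or $\lceil \Delta/2\rceil$ and each original edge becomes a complete bipartite subgraph. Every pair of edges in this graph is either incident or connected by a third edge, so every strong edge-colouring uses distinct colours on distinct edges, and one checks that the number of edges is $(5/4)\Delta^2$ when $\Delta$ is even. Any proof must therefore be tight precisely on graphs that locally resemble $C_5$-blow-ups.

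My central attack would be the probabilistic method in the spirit of Molloy and Reed and subsequent refinements: colour $E(G)$ independently and uniformly from a palette of $\lceil (5/4)\Delta^2\rceil$ colours, for each edge $uv$ let $A_{uv}$ be the event that $uv$ conflicts with some edge at strong-distance at most $2$, and apply the Lov\'asz Local Lemma (or an iterated entropy-compression scheme) to force $\Pr\bigl[\bigcap_{uv}\overline{A_{uv}}\bigr]>0$. The naive analysis only yields $2\Delta^2$, since each edge has up to $2\Delta(\Delta-1)$ neighbours in the strong-distance-$2$ graph; reaching the constant $5/4$ requires sharper counting that exploits overlaps among local neighbourhoods. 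Concretely one would show that for a typical edge $uv$ the number of \emph{distinct} edges at strong-distance at most $2$ from $uv$ is substantially smaller than $2\Delta^2$, a saving that should be present precisely because $C_5$-blow-ups are the only genuine obstructions.

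I would couple this probabilistic analysis with the structural theme of the present paper. Since $\chi_2'(G)\ge \omega_2'(G)$, the conjecture implies the analogous upper bound $\omega_2'(G)\le (5/4)\Delta^2$ of Faudree, Gy\'arf\'as, Schelp and Tuza, and I would attempt that intermediate target first---attractive because the forbidden-cycle results announced in the abstract already give $(5/4)\Delta^2$ under triangle-freeness---before trying to show that away from $C_5$-blow-up-like regions the local structure admits enough expansion to compensate for the LLL loss. The hope is to partition $E(G)$ into a \emph{rigid} part lying inside approximate $C_5$-blow-ups, coloured by a structural scheme that mimics the extremal construction, and a \emph{flexible} part coloured randomly on a much smaller palette, with a careful interface argument gluing the two together.

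The main obstacle is precisely this gluing, together with the more conceptual gap between $\omega_2'$ and $\chi_2'$: there is currently no general mechanism turning clique-type bounds on strong edge-neighbourhoods into chromatic bounds at the conjectured leading constant, and the Lov\'asz Local Lemma is provably lossy by a constant factor. Realistically I would expect to produce only partial progress---for instance, settling the strong-clique-number analogue in further forbidden-cycle regimes in the style of this paper---and to fall well short of the full conjecture.
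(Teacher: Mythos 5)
You have correctly identified that this statement is not a theorem of the paper but the open Erd\H{o}s--Ne\v{s}et\v{r}il conjecture, which the paper cites only as motivation and does not prove (nor claim to prove). There is therefore no ``paper's own proof'' to compare against, and your decision to present a research programme and explicitly disclaim completeness is the right call: an attempt to manufacture a proof would necessarily have been wrong.

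Your discussion of the known landscape is accurate in its essentials---the $C_5$ blow-up as the extremal example, Molloy--Reed's probabilistic $(2-\varepsilon)\Delta^2$ bound, the LLL baseline of roughly $2\Delta^2$, and the relation $\omega_2'(G)\le\chi_2'(G)$ which makes the strong clique number a natural intermediate target. Two small refinements worth noting: the proposal to ``first settle $\omega_2'(G)\le\frac54\Delta^2$'' is itself still open in general (the best known is $\frac43\Delta^2$ by Faron and Postle, as the paper mentions), so it is not a strictly easier warm-up; and the triangle-free bound $\omega_2'(G)\le\frac54\Delta^2$ in this paper (Theorem~\ref{thm:main,odd}\ref{itm:triangle}) is a clique-number result, not a chromatic-index result, so it does not immediately feed into an LLL-type colouring argument in the way your ``rigid plus flexible'' decomposition would need. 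Your closing paragraph---that the real obstacle is the conceptual gap between clique-type local information and a global colouring at the conjectured constant---is exactly the standard assessment of why the conjecture remains open.
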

\noindent
If true, this bound would be sharp for even $\Delta$, by considering a $5$-cycle and substituting each of its vertices with a copy of a stable set of size $\frac12\Delta$.
Molloy and Reed~\cite{MoRe97} proved the existence of some absolute constant $\varepsilon>0$ such that $\chi_2'(G) \le (2-\varepsilon)\Delta^2$ for any graph $G$ with $\DeltaG=\Delta$. Despite significant efforts, cf.~e.g.~\cite{BrJo18}, there is no explicit lower bound on $\varepsilon$ greater than $0.001$ (though better results hold when $\Delta$ is large).

The problem remains difficult if we restrict to a class defined by some forbidden set of subgraphs, say, cycles. The following conjecture of Faudree, Gy\'arf\'as, Schelp and Tuza~\cite{FGST89} has also remained wide open since the 1980s. 

\begin{conjecture}[\cite{FGST89}]\label{conj:FGST}
For a bipartite graph $G$ with $\DeltaG=\Delta$, $\chi_2'(G) \le \Delta^2$.
\end{conjecture}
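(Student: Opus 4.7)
Since Conjecture~\ref{conj:FGST} has been open for over thirty years, my plan is necessarily speculative; I describe the route that seems most natural given the bipartite hypothesis. The starting point is König's theorem: any bipartite graph $G$ with $\Delta_G=\Delta$ admits a proper $\Delta$-edge-coloring. The hope is to combine \emph{two} such local labelings into a single strong edge-coloring using $\Delta^2$ colors total.

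Concretely, fix a bipartition $(A,B)$ of $G$. For each vertex $v\in V(G)$, choose an injection $\pi_v\colon N(v)\to[\Delta]$. Assign to each edge $ab$ (with $a\in A$, $b\in B$) the colour $(\pi_a(b),\pi_b(a))\in[\Delta]^2$. This uses exactly $\Delta^2$ colors, and the coloring is automatically proper on any pair of incident edges: two edges sharing $a$ differ in the first coordinate, while two edges sharing $b$ differ in the second. The remaining difficulty is with edges at graph distance exactly $1$: edges $ab$ and $a'b'$ with $ab'\in E(G)$, say, may collide. The natural next step is to choose each $\pi_v$ uniformly at random and apply the Lov\'asz Local Lemma to argue that, with positive probability, no such distance-$1$ collisions occur. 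A direct calculation gives a collision probability $\Theta(1/\Delta^2)$ per pair, but the number of distance-$1$ partners of a fixed edge is also $\Theta(\Delta^2)$, so the expected number of bad events per edge is $\Theta(1)$ and the LLL is simply too weak to apply off the shelf. One would want to strengthen this with a refined iterative argument \`a la Molloy--Reed, or by coupling the labels $\pi_a,\pi_b$ so that distance-$1$ collisions become negatively correlated---for example, by extracting $\pi_A$ and $\pi_B$ from two independent perfect matchings in a K\"onig decomposition of $G$.

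Even granting such a refinement, passing from $(1+o(1))\Delta^2$ to exactly $\Delta^2$ colors seems to require a separate argument. The plan would be: after producing a pair-coloring with only $o(\Delta^2)$ distance-$1$ conflicts, attempt Kempe-style swaps along alternating cycles in $G$ (which exist in abundance by the bipartite hypothesis) to eliminate conflicts one by one without creating new ones, much as in Vizing's theorem.

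The main obstacle---and the reason this conjecture has resisted attack for three decades---is the extremal regime of bipartite graphs dense in $4$-cycles, such as subgraphs of incidence graphs of projective planes. There the distance-$1$ relation is so dense that essentially every pair of edges within a bounded neighborhood is strongly adjacent, and the pair-coloring $(\pi_a,\pi_b)$ has no slack to be corrected locally. Any successful approach would likely need a genuinely global structural argument, perhaps leveraging the bipartite strong clique bound $\omega_2'(G)\le\Delta^2$ (already established by Faudree et al.) together with a nontrivial decomposition of $G$ into strong-clique-like pieces and a careful reuse of colors between them.
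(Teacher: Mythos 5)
This statement is an open conjecture due to Faudree, Gy\'arf\'as, Schelp and Tuza. The paper under review does not claim a proof of it; it explicitly records it as Conjecture~\ref{conj:FGST} and instead proves the weaker strong-clique analogue (Theorem~\ref{thm:FSGT}, due to the same authors) together with the new refinements in Theorem~\ref{thm:main,odd}. You correctly recognize that the conjecture is open, and you are upfront that your plan is speculative rather than a proof, so there is nothing in the paper to compare a proof against.

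As for the substance of your sketch: the K\"onig-based pair-labelling $(\pi_a(b),\pi_b(a))\in[\Delta]^2$ is a well-known heuristic for this conjecture, and you diagnose its failure mode accurately. Incident edges are handled automatically, but a fixed edge $ab$ has up to $\Theta(\Delta^2)$ neighbours at line-graph distance two, each colliding with probability $\Theta(1/\Delta^2)$ under random injections, so the Local Lemma has no slack and the naive random approach stalls at roughly $2\Delta^2$ colours (which is what is actually known in general). Your observation that incidence graphs of projective planes are the hard regime is also the right intuition: there $L(G)^2$ restricted to a neighbourhood is essentially complete, and no local repair can help. If you want to turn this into something publishable, the realistic targets in the literature are (i) improving the leading constant below $2$ for bipartite graphs, or (ii) proving $\chi_2'\le\Delta^2$ under an additional girth or $C_{2k}$-freeness hypothesis, in the spirit of Mahdian's Theorem~\ref{thm:Mah}; proving the full conjecture would be a major breakthrough and would require an idea beyond what is sketched here.
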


\noindent
Balanced complete bipartite graphs meet the conjectured bound.
A more general assertion due to Mahdian~\cite{Mah00} is also open: perhaps it suffices to forbid just one in particular of the odd cycle lengths rather than all of them.
\begin{conjecture}[\cite{Mah00}]\label{conj:Mah}
For a $C_5$-free graph $G$ with $\DeltaG=\Delta$, $\chi_2'(G) \le \Delta^2$.
\end{conjecture}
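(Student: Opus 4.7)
The plan is to attack this in two stages: first establish the clique-number counterpart $\omega_2'(G) \le \Delta^2$ for $C_5$-free $G$, and then attempt to lift it to a chromatic bound by a probabilistic colouring of the square of the line graph $L(G)^2$. The key structural observation extracted from $C_5$-freeness is this: fixing an edge $e = uv$, if $x \in N(u) \setminus \{v\}$ and $y \in N(v) \setminus \{u\}$ are distinct, then no vertex $z \notin \{u,v,x,y\}$ can be adjacent to both $x$ and $y$, lest $u\,x\,z\,y\,v$ form a $5$-cycle. This rules out many of the second-neighbour configurations that drive the trivial upper bound of $2\Delta(\Delta-1)$ on the degree of $e$ in $L(G)^2$.

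First I would enumerate the edges $f$ of a hypothetical strong clique $K \ni e$ by partitioning them according to how they meet $N[u] \cup N[v]$. Edges sharing an endpoint with $e$ contribute at most $2(\Delta-1)$, while every remaining $f \in K$ has one endpoint in $(N(u) \cup N(v)) \setminus \{u,v\}$ and the other at distance at most $2$ from $e$. A systematic accounting, applying the structural observation above at each step, should pin the total at exactly $\Delta^2$, matching the balanced complete bipartite extremal example. This parallels the strategy for the more general result $\omega_2'(G) \le \Delta^2$ for $C_{2k+1}$-free graphs ($k \ge 2$) announced in the abstract, and builds on the arguments of Faudree--Gy\'arf\'as--Schelp--Tuza and Faron--Postle cited above.

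Converting the clique bound to the chromatic bound is the genuine obstacle, and the reason Conjecture~\ref{conj:Mah} has stood open since 2000. A natural route is a Molloy--Reed-style randomised colouring of $L(G)^2$: draw a partial colouring, excise conflicts via the Lov\'asz Local Lemma, and iterate. The crux is to exploit the rigidity imposed by the absence of $C_5$ to prove substantial overlap in the conflict sets of distinct edges, thereby shrinking the required palette from the trivial $\Theta(\Delta^2)$ down to exactly $\Delta^2$. I would regard $\chi_2'(G) \le (1+o(1))\Delta^2$ as $\Delta \to \infty$ as a realistic intermediate target; pinning down the absolute constant in front of $\Delta^2$ is likely to demand a genuinely new structural insight beyond the standard nibble toolbox, as closing this factor-of-two gap requires leveraging $C_5$-freeness globally, not merely at each edge in isolation.
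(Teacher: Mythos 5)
The statement you were given is labeled a \emph{conjecture} in the paper, and the paper does not prove it: it is Mahdian's open conjecture from 2000, for which the paper only offers supporting evidence. You correctly recognize this, and you are right that the genuine obstacle is the chromatic stage; no amount of sharpening the nibble toolbox is currently known to reach the constant $1$ in front of $\Delta^2$ even unconditionally, let alone for $C_5$-free graphs. So your proposal is not a proof, and it honestly says as much.

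Your first stage does correspond to something the paper actually proves, namely Theorem~\ref{thm:main,odd}\ref{itm:C5} ($\omega_2'(G)\le\Delta^2$ for $C_5$-free $G$), established via the stronger Ore-degree bound of Theorem~\ref{thm:C5} and Lemma~\ref{lem:C5}. Your sketch is edge-centric (fix $e=uv$ and enumerate edges of the strong clique by how they meet $N[u]\cup N[v]$), whereas the paper's argument is vertex-centric: it picks a vertex $v$ of maximum degree $\DeltaH$ in the clique subgraph $H$ and shows, using $C_5$-freeness and $|N_H(v)|\ge2$, that every edge $st$ of $H$ avoiding $N_G(v)$ has one endpoint complete to $N_H(v)$ and the other anticomplete to it. This produces a set $S$ of ``dominating'' vertices and a clean three-part cover of $E(H)$ that yields $e(H)\le\DeltaH(\sigma_G(H)-\DeltaH)\le\tfrac14\sigma_G(H)^2$. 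Your observation that no $z$ can be adjacent to both $x\in N(u)$ and $y\in N(v)$ (else $uxzyv$ is a $C_5$) is true and in the right spirit, but by itself it does not obviously pin the count at $\Delta^2$; the ``systematic accounting'' step is doing all the work and is left unspecified. The paper's choice to work with the Ore degree and to isolate the set $S$ is precisely what makes that accounting go through, and it also gives the bound for free in the bipartite case. Your second stage, as you note, remains speculative and is where the conjecture genuinely stands open.
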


\noindent
By contrast, the best understood classes are when the forbidden set includes some bipartite graph. 
In particular, Mahdian~\cite{Mah00} showed the following for $C_4$. This was subsequently extended to a similar statement with $C_4$ replaced by any bipartite $H$ by Vu~\cite{Vu02}. See~\cite{KaPi18} for an explicit derivation with $H=C_{2k}$.
\begin{theorem}[\cite{Mah00}]\label{thm:Mah}
Fix $\varepsilon >0$. 
If $G$ is a $C_4$-free graph with $\DeltaG=\Delta$, then, provided $\Delta$ is large enough, $\chi_2'(G) \le (2+\varepsilon)\Delta^2/\log \Delta$.
This is sharp up to the multiplicative constant factor.
\end{theorem}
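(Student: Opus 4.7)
My plan is to recast the problem as ordinary graph colouring of the square $H := L(G)^2$ of the line graph of $G$, so that $\chi_2'(G) = \chi(H)$, and then to invoke a sparse-neighbourhood chromatic bound of the kind proved by Johansson or by Alon, Krivelevich and Sudakov. Since each edge of $G$ has at most $2\Delta(\Delta-1)$ other edges within $L(G)$-distance $2$, the graph $H$ has maximum degree $D \le 2\Delta^2$. The trivial greedy bound $\chi_2'(G) \le D+1$ has the right order $\Theta(\Delta^2)$ but lacks any logarithmic saving.

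To extract the $\log\Delta$ factor, I would appeal to the following general principle: if a graph has maximum degree $D$ and every neighbourhood spans at most $D^2/f$ edges, then its chromatic number is $O(D/\log f)$. Applied with $f$ equal to a fixed positive power of $\Delta$, this would already give $\chi(H) = O(\Delta^2/\log \Delta)$, and by tuning the exponent and the constants one can arrange the leading multiplicative factor to be $2+\eps$ for any fixed $\eps>0$, provided $\Delta$ is large enough. The task therefore reduces to the following sparsity claim: for every edge $e = uv$ of $G$, the subgraph of $H$ induced on $N_H(e)$ contains only $O(\Delta^{4-c})$ edges for some absolute constant $c>0$, whereas the na{\"i}ve bound is $\binom{D}{2} = O(\Delta^4)$.

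The sparsity is where $C_4$-freeness is used, via the classical codegree observation that in a $C_4$-free graph any two distinct vertices share at most one common neighbour. The neighbourhood $N_H(e)$ consists of the edges $f \ne e$ having at least one endpoint in $N[u] \cup N[v]$, and an $H$-edge between two such edges $f_1$ and $f_2$ arises either from $f_1 \cap f_2 \ne \emptyset$ or from a $G$-edge joining $f_1$ to $f_2$. I would partition the ordered pairs $(f_1,f_2)$ according to how each attaches to $\{u,v\} \cup N(u) \cup N(v)$ and, type by type, reduce the count to a count of short walks in $G$ with one or two endpoints pinned; at each stage the codegree condition replaces a factor of $\Delta$ in the na{\"i}ve estimate by a constant. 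I expect this bookkeeping, rather than any single clever step, to be the principal technical obstacle, as the various attachment patterns (pairs sharing $u$ or $v$, pairs linked through a common neighbour, pairs joined through two outside vertices, and so on) must each be handled separately.

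Finally, sharpness up to the multiplicative constant is witnessed by $C_4$-free extremal graphs: the incidence graph of a projective plane of order $q$, for instance, is bipartite, $(q+1)$-regular and has $\Theta(q^3)$ edges, and a short double-counting argument bounding the maximum size of an induced matching in such near-extremal $C_4$-free graphs forces its strong chromatic index to be $\Omega(\Delta^2/\log\Delta)$.
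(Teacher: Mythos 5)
The paper does not prove this theorem: it is stated as a citation to Mahdian~\cite{Mah00}, so there is no in-paper argument to compare against. Your sketch does capture what I believe to be the right high-level strategy, and roughly Mahdian's own (later generalised by Vu~\cite{Vu02} to arbitrary forbidden bipartite subgraphs): establish that $H = L(G)^2$ is locally sparse when $G$ is $C_4$-free, and then apply a chromatic bound for locally sparse graphs. The required local sparsity is that, for every vertex $e$ of $H$, the subgraph of $H$ induced on $N_H(e)$ has $O(\Delta^3)$ edges, a factor-$\Theta(\Delta)$ saving over the trivial $\Theta(\Delta^4)$; this does indeed follow from the codegree property of $C_4$-free graphs via a case analysis of the sort you outline.

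There are nevertheless two genuine gaps. First, the Alon--Krivelevich--Sudakov principle you invoke only yields $\chi(H) = O(D/\log f)$ with an unspecified absolute constant, so it cannot by itself produce the precise leading factor $2+\eps$. Your proposed remedy of ``tuning the exponent'' does not work: the sparsity parameter $f$ is dictated by the graph (here $f = \Theta(\Delta)$ and nothing more), not a free parameter. What is actually needed is the asymptotically sharp local-sparsity bound $\chi(H) \le (1+o(1))\,D/\ln f$, applied with $D \le 2\Delta^2$ and $\ln f = (1-o(1))\ln\Delta$; establishing that sharp constant is the substantial probabilistic content of the theorem, and your sketch treats it as a black box. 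Second, the sharpness claim via projective plane incidence graphs is under-specified: for the double-counting argument you allude to, one would need the maximum induced matching in such a graph to be $O(\Delta\log\Delta)$, which you have not verified and which is not obvious. The standard lower-bound construction instead exhibits a $C_4$-free pseudorandom graph for which $L(G)^2$ behaves like a random graph on $\Theta(\Delta^3)$ vertices with average degree $\Theta(\Delta^2)$, so that $\chi(L(G)^2) = \Omega(\Delta^2/\log\Delta)$ by standard estimates for random graphs.
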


\noindent
Note that graphs of small maximum degree remain a hypothetical obstruction to the outright confirmation of Conjectures~\ref{conj:ErNe} or~\ref{conj:FGST} for $C_4$-free graphs.


Based on the above frustrating state of affairs, it is justified to pursue a simpler parameter than $\chi'_2$, even for restricted graph classes. In particular, a {\em strong clique} of $G$ is a set of edges every pair of which are incident or connected by an edge in $G$;
the {\em strong clique number} $\omega_2'(G)$ of $G$ is the size of a largest such set. No two edges of a strong clique may have the same colour in a strong edge-colouring, so $\omega_2'(G)\le \chi_2'(G)$.
Thus the following classic result is viewed as supporting evidence towards Conjecture~\ref{conj:FGST}.
\begin{theorem}[\cite{FSGT90}]\label{thm:FSGT}
For a bipartite graph $G$ with $\DeltaG=\Delta$, $\omega_2'(G) \le \Delta^2$.
\end{theorem}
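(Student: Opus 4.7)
My plan is to fix a strong clique $S\subseteq E(G)$ and charge its edges to a well-chosen vertex.  Write $G$ for the bipartite graph on parts $A\cup B$, and pick $a^{*}$ to be a vertex of maximum $S$-degree in $V(G)$; by the $A\leftrightarrow B$ symmetry I may assume $a^{*}\in A$.  Set $m:=\deg_S(a^{*})$ and $N_S(a^{*})=\{b_1,\dots,b_m\}\subseteq N_G(a^{*})$.  Note that by the choice of $a^{*}$ every $v\in V(G)$ satisfies $\deg_S(v)\le m$.

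The $m$ edges $a^{*}b_i$ contribute $m$ to $|S|$.  I would partition the remaining edges $a'b'\in S$ (where $a'\ne a^{*}$) into (i) those with $b'\in N_G(a^{*})$ and (ii) those with $b'\notin N_G(a^{*})$.  Class (i) is bounded by $\sum_{b'\in N_G(a^{*})}(\deg_S(b')-[b'\in N_S(a^{*})])\le\Delta m-m=(\Delta-1)m$, since $|N_G(a^{*})|\le\Delta$ and $\deg_S(\cdot)\le m$.

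For class (ii), the strong clique condition applied to $a'b'$ and each $a^{*}b_i$ is decisive.  These two edges share no vertex (since $b_i\in N_G(a^{*})$ but $b'\notin N_G(a^{*})$), so they must be joined by an edge of $G$; of the two bipartite candidates, $a^{*}b'$ does not exist, forcing $a'b_i\in E(G)$ for every $i\in\{1,\dots,m\}$.  Hence $a'\in X:=\bigcap_{i=1}^{m}N_G(b_i)\setminus\{a^{*}\}$, and so $|X|\le\deg(b_1)-1\le\Delta-1$.  The key step is then to observe that each $a'\in X$ already has the $m$ vertices $b_1,\dots,b_m\in N_G(a^{*})$ among its at most $\Delta$ neighbors, hence at most $\Delta-m$ neighbors outside $N_G(a^{*})$; this yields at most $\Delta-m$ class-(ii) edges at $a'$, and so at most $(\Delta-1)(\Delta-m)$ in total.

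Summing, $|S|\le m+(\Delta-1)m+(\Delta-1)(\Delta-m)=\Delta^2-\Delta+m\le\Delta^2$, with equality forcing $m=\Delta$ (as in $K_{\Delta,\Delta}$).  I expect the main obstacle to be the class-(ii) bound:  naively estimating it by $(\Delta-1)m$ only yields $|S|\le(2\Delta-1)\Delta$, and it is precisely the observation that the $b_i$ crowd $m$ of the $\Delta$ neighbor slots of each $a'\in X$---producing the factor $\Delta-m$---that gives the tight bound $\Delta^2$.
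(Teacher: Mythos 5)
Your proof is correct. A few points of comparison with the paper: the paper does not itself prove Theorem~\ref{thm:FSGT} (it is cited from~\cite{FSGT90}), but its Lemma~\ref{lem:C5}, which implies the statement via Theorem~\ref{thm:C5}, uses the same core idea as you do --- pick a vertex $v$ ($=a^*$) of maximum degree in the clique subgraph $H$ ($=S$), observe that for any edge $a'b'\in S$ with $b'\notin N_G(a^*)$ the strong-clique condition together with bipartiteness forces $a'$ to be complete to $N_H(a^*)=\{b_1,\dots,b_m\}$, and then count. Where you differ is in the bookkeeping of the final count. You exploit completeness to $\{b_1,\dots,b_m\}$ from the $a'$-side: each $a'\in X$ has only $\Delta-m$ neighbour-slots left for class-(ii) edges, giving the product $(\Delta-1)(\Delta-m)$ and the total $\Delta^2-\Delta+m$. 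The paper exploits the same completeness from the $b_i$-side: each $b_i$ sees all of $S$, so has at most $\deg_G(b_i)-|S|$ incident edges of $H$ avoiding $S$, and the $|S|$-terms cancel across the three classes to yield the sharper $\DeltaH(\sigma_G(H)-\DeltaH)\le m(2\Delta-m)$. Your intermediate bound $\Delta^2-\Delta+m$ is weaker than the paper's $m(2\Delta-m)$ whenever $m<\Delta$, but both specialize to $\Delta^2$ at $m=\Delta$, which is all that is needed here. The paper's packaging buys the Ore-degree refinement and, with a bit more care about which endpoint is ``complete'' versus ``anticomplete'', the extension from bipartite to $C_5$-free multigraphs; your version is more elementary and self-contained for the bipartite case and makes the $K_{\Delta,\Delta}$ extremal case ($m=\Delta$) transparent.
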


By forbidding just one fixed odd cycle length rather than all of them, we refine and improve upon Theorem~\ref{thm:FSGT} as follows.

\begin{theorem}\label{thm:main,odd}
Let $G$ be a graph with $\DeltaG=\Delta$.
\begin{enumerate}
\item\label{itm:triangle} $\omega_2'(G)\le\frac54\Delta^2$ if $G$ is triangle-free.
\item\label{itm:C5} $\omega_2'(G)\le\Delta^2$ if $G$ is $C_5$-free.
\item\label{itm:odd} $\omega_2'(G)\le\Delta^2$ if $G$ is $C_{2k+1}$-free, $k\ge 3$, provided $\Delta \ge 3k^2+10k$. 
\end{enumerate}
\end{theorem}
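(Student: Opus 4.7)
My plan is to fix a maximum strong clique $S$ of $G$ together with a well-chosen edge $uv\in S$, and then bound $|S|$ by partitioning its edges according to how they sit relative to $X:=N[u]\cup N[v]$. Since every $e\in S$ is at strong distance at most $1$ from $uv$, it has at least one endpoint in $X$, so $S$ decomposes into the edges incident to $\{u,v\}$, the edges lying entirely inside $X\setminus\{u,v\}$, and the crossing edges from $X$ to $C:=V(G)\setminus X$. I will combine degree-based counts with the pairwise strong-distance constraints between edges of $S$, and use the forbidden-cycle hypothesis to rule out enough configurations.

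For \eqref{itm:triangle}, triangle-freeness forces $N(u)\cap N(v)=\emptyset$, so $|X|=d(u)+d(v)$, and makes $A:=N(u)\setminus\{v\}$ and $B:=N(v)\setminus\{u\}$ independent sets. I will count: $e_u+e_v-1$ incidences at $\{u,v\}$; a quantity $\alpha$ of $S$-edges between $A$ and $B$ (at most $(d(u)-1)(d(v)-1)$); and a quantity $\gamma=\gamma_a+\gamma_b$ of crossings in $S$ from $A\cup B$ to $C$. The extra leverage comes from the pairwise strong-distance property: for any two disjoint crossings $a_1c_1,a_2c_2\in S$ with $a_1,a_2\in A$, triangle-freeness rules out $a_1a_2\in E(G)$, so one of $a_1c_2,a_2c_1,c_1c_2$ must lie in $E(G)$. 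A careful double-count of these forced third edges against degrees at $u,v$ and inside $C$, together with an averaging choice of $uv$ within $S$, should yield the sharp bound $\tfrac54\Delta^2$ matched by the $C_5$-blow-up with stable sets of size $\Delta/2$.

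For \eqref{itm:C5}, the improvement to $\Delta^2$ should come from using $C_5$-freeness to kill the crossings altogether: a crossing $ac\in S$ with $a\in A$ and $c\in C$ supplies a length-$2$ path $u\,a\,c$, and combined with $uv$ and any second edge of $S$ that links $c$ back towards $\{u,v\}$ via a forced connecting edge, a short case analysis closes the required $5$-cycle. This should force $\gamma=0$, leaving $|S|$ bounded by the inner count $d(u)d(v)\le\Delta^2$, matched by $K_{\Delta,\Delta}$. For \eqref{itm:odd} the local obstruction from $C_{2k+1}$-freeness is weaker, and my plan is to propagate it along longer walks: starting from two disjoint edges of $S$ connected by an edge (giving a path of length $3$), I would trace, for each further edge of $S$, a walk of length at most $2k-1$ built from $S$-edges and their forced connectors, that closes into a $C_{2k+1}$ unless it collides with itself. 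The condition $\Delta\ge 3k^2+10k$ should emerge as precisely the threshold above which a union bound over all $O(k^2)$ possible vertex coincidences along the walk leaves a genuine $(2k+1)$-cycle. Carrying out this combinatorial case analysis for arbitrary $k\ge 3$, while simultaneously handling parity issues and near-coincidences, is the main obstacle I anticipate.
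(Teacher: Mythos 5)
Your proposal has a concrete error in part~\ref{itm:C5} and leaves parts~\ref{itm:triangle} and~\ref{itm:odd} as vague blueprints that do not match what is actually needed.

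For~\ref{itm:C5}, the central claim that $C_5$-freeness ``kills the crossings altogether'' (forces $\gamma=0$) is false. Consider the path $w\,u\,v\,b\,c$: it is $C_5$-free, $c\notin X=N[u]\cup N[v]$, and the edge $bc$ is at strong distance $1$ from $uv$ (via $bv$), so $bc$ is a perfectly legitimate member of a strong clique containing $uv$ that leaves $X$. What $C_5$-freeness actually forces is far subtler, and is the content of Lemma~\ref{lem:C5} in the paper: after switching to a vertex $v$ of maximum $H$-degree (not an edge), any $st\in E(H)$ with both endpoints outside $N_G(v)$ must have one endpoint \emph{complete} to $N_H(v)$ and the other \emph{anticomplete} to $N_H(v)$ (because $u s t u' v$ would otherwise be a $C_5$ for some $u,u'\in N_H(v)$, using $|N_H(v)|\geq 2$). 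That structure, combined with an Ore-degree count, is what delivers $\Delta^2$; there is no route to it by pretending the crossing edges vanish.

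For~\ref{itm:triangle}, the phrase ``a careful double-count of these forced third edges \dots together with an averaging choice of $uv$'' is standing in for the entire argument. In the paper one fixes $v$ of maximum $H$-degree and shows, via triangle-freeness, that each connected component of $H$ restricted to $V(H)\setminus N_G[v]$ induces a single partition $\{A_i,\overline{A_i}\}$ of $N_H(v)$, that these partitions are nested, and hence that the union of the ``nontrivial'' components is \emph{bipartite}; the sharp $\frac54\Delta^2$ then drops out of applying the Ore-degree version of Lemma~\ref{lem:C5} to this bipartite piece plus a separate count for the ``trivial'' components. Your sketch contains none of this structure and gives no indication of how the exact constant $\frac54$ would emerge. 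For~\ref{itm:odd}, you explicitly flag that carrying out the walk-tracing case analysis is ``the main obstacle''; the paper avoids this entirely by invoking two Tur\'an-type ingredients, a branching-edges lemma for $P_{2k+1}$-free bipartite graphs (Lemma~\ref{ksat}) and the Erd\H{o}s--Gallai path bound, to control the edges in the second neighbourhood of $v$. The threshold $3k^2+10k$ is just what falls out of summing those crude estimates, not a tight union-bound threshold, so I would not expect the heuristic you describe to produce it.
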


\noindent
The blown-up $5$-cycles are triangle-free so Theorem~\ref{thm:main,odd}\ref{itm:triangle} is best possible when $\Delta$ is even.
Parts~\ref{itm:C5} and~\ref{itm:odd} of Theorem~\ref{thm:main,odd} are sharp for the balanced complete bipartite graphs. They are a common strengthening and generalisation of Theorem~\ref{thm:FSGT} and a result of Mahdian~\cite[Thm.~15]{Mah00}. Part~\ref{itm:C5} may be viewed as support for Conjecture~\ref{conj:Mah}.
Parts~\ref{itm:triangle} and~\ref{itm:C5} have a common ingredient, but curiously the three proofs for Theorem~\ref{thm:main,odd} are all quite distinct.

We naturally find it interesting to also consider forbidden cycle lengths that are even. 
In this case, we propose the following behaviour.

\begin{conjecture}\label{conj:even}
For a $C_{2k}$-free graph $G$ with $\DeltaG=\Delta$, $\omega_2'(G) \le (2k-1)(\Delta-k+1)$.
\end{conjecture}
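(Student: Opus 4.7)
The extremal graph matching the conjecture is $K_{2k-1}$ with $\Delta-2k+2$ pendant edges attached to each of its $2k-1$ vertices: it is $C_{2k}$-free (any cycle lies in $K_{2k-1}$, which has only $2k-1$ vertices), its maximum degree is $\Delta$, and its entire edge set, of size $\binom{2k-1}{2} + (2k-1)(\Delta-2k+2) = (2k-1)(\Delta-k+1)$, is a strong clique. The plan is to show that any strong clique $M$ in a $C_{2k}$-free graph $G$ is essentially supported by such a skeleton: a clique of size at most $2k-1$ together with stars sticking out of it.

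The crux is the structural lemma that $M$ admits a vertex cover of size at most $2k-1$ in $G$, i.e.\ there exist vertices $v_1,\dots,v_s$ with $s\le 2k-1$ meeting every edge of $M$. To prove this one would assume the vertex cover number of $M$ is at least $2k$ and try to derive a $C_{2k}$ in $G$. A first attempt is to invoke a K\"onig-type bound to produce a matching of size $k$ inside $M$; since $M$ is a strong clique, for every pair of these matching edges there must be an edge of $G$ joining them, and one hopes to patch these $k$ matching edges and $\binom{k}{2}$ joining edges into a cycle of length exactly $2k$. This is the central obstacle: a matching of size $k$ in $M$ is \emph{not} enough on its own, since already in the extremal graph above one can find $k$ disjoint $M$-edges without creating any $C_{2k}$. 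A finer argument must exploit both the stronger ``cover $\ge 2k$'' hypothesis and the freedom in choosing which endpoint of each matching edge to glue; carefully tracking the parity and length of walks in $G$ is the key difficulty.

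Once the cover $S=\{v_1,\dots,v_s\}$ is found, a short bookkeeping argument should give the bound. Writing $M_i$ for the $M$-edges incident to $v_i$ and $M_S$ for those with both endpoints in $S$, one has $|M|=\sum_{i=1}^s |M_i|-|M_S|$ with each $|M_i|\le\Delta$. To reach the target $s\Delta-\binom{s}{2}$ when $s=2k-1$, one must argue that any shortfall in $|M_S|$ from $\binom{s}{2}$ is compensated by a corresponding decrease in $\sum_i|M_i|$: if $v_iv_j\notin E(G)$ then the stars $M_i$ and $M_j$ cannot both be saturated at $\Delta$, for otherwise connecting a maximum star at $v_i$ to one at $v_j$ through the paths provided by the strong clique condition would yield a cycle of length exactly $2k$. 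This amortisation converts the structural ``cover plus near-clique'' claim into the numerical bound $(2k-1)(\Delta-k+1)$.
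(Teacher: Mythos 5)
You should first be aware that the statement you are trying to prove is Conjecture~\ref{conj:even} of the paper: it is posed as an open problem, and the paper itself only establishes the much weaker partial results of Theorem~\ref{thm:main,even} (namely $3(\Delta-1)$ for $k=2$, $10k^2(\Delta-1)$ for general $k$, and the near-optimal bound only after additionally forbidding $C_{2k+1}$ and $C_{2k+2}$). So there is no proof in the paper to compare against, and your text does not supply one either: it is a plan whose central step --- the claim that every strong clique $M$ in a $C_{2k}$-free graph has a vertex cover of size at most $2k-1$ --- is left entirely open. You say so yourself ("this is the central obstacle"), and the fallback you sketch (extract a matching of size $k$ from $M$ and splice the connecting edges into a $C_{2k}$) fails exactly for the reason you note. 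The concluding "amortisation" step is likewise only a hope: you assert that $v_iv_j\notin E(G)$ forces $|M_i|+|M_j|$ to drop, but give no mechanism for producing a cycle of length \emph{exactly} $2k$ from two saturated stars joined by distance-$2$ connections.

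Beyond being incomplete, the structural lemma you propose is doubtful as stated. Already for $k=2$, the paper's proof of Theorem~\ref{thm:main,even}\ref{itm:C4} (Section~\ref{sec:C4}) must handle configurations --- e.g.\ Case~1 and Subsubcase~2.1.1 there --- in which the strong clique is \emph{not} concentrated on three vertices; the bound in those cases is obtained by ad hoc counting ($2\Delta+2$, $\Delta+5$, etc.), not by exhibiting a $3$-vertex cover, and only in the remaining subcases does the extremal "hairy triangle" structure emerge. For general $k$ the paper's route to Theorem~\ref{thm:main,even}\ref{itm:even} is entirely different (a Tur\'an-type lemma, Lemma~\ref{ksat}, about $k$-branching edges out of the neighbourhood of an edge, combined with the Erd\H{o}s--Gallai bound), and it loses a factor of order $k$ against the conjecture; the paper only recovers the right order of magnitude by excluding three consecutive cycle lengths (Theorem~\ref{thm:3cycles}, via an iterative path-extension argument). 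If you want to pursue your cover-based strategy, the honest formulation is probably a weaker dichotomy --- either $M$ is covered by few vertices or $M$ is small --- and proving even that would require the hard cycle-construction work that your sketch defers.
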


\noindent
If true, this is sharp (for $\Delta\ge 2k-2$) by considering some clique on $2k-1$ vertices, to each vertex of which is attached $\Delta-2k+2$ pendant edges. We refer to this construction as a {\em hairy} clique of order $2k-1$.

In support of Conjecture~\ref{conj:even}, we have the following three bounds. The first essentially settles the $k=2$ case. The second is too large by an $O(k)$ factor. The third is almost the conjectured bound, but with the exclusion of two more cycle lengths (also absent in the hairy clique of order $2k-1$).

\begin{theorem}\label{thm:main,even}
Let $G$ be a graph with $\DeltaG=\Delta$.
\begin{enumerate}
\item\label{itm:C4} $\omega_2'(G)\le3(\Delta-1)$ if $G$ is $C_4$-free, provided $\Delta \ge 4$.
\item\label{itm:even} $\omega_2'(G)\le 10k^2(\Delta-1)$ if $G$ is $C_{2k}$-free, $k\ge 3$.
\item\label{itm:3cycles,even} $\omega_2'(G) \le (2k-1)(\Delta-1)+2$ if $G$ is $\{C_{2k},C_{2k+1},C_{2k+2}\}$-free, $k\ge2$.
\end{enumerate}
\end{theorem}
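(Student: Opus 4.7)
Let $M \subseteq E(G)$ be a strong clique. My plan is to extract a maximum matching $M_0 = \{e_1, \dots, e_m\} \subseteq M$, writing $e_i = u_i v_i$, and to exploit the rich structure forced on $V(M_0)$ by the strong-clique condition together with the forbidden-cycle hypothesis. Maximality of $M_0$ guarantees that $V(M_0)$ is a vertex cover of $M$. Moreover, the edges of $M_0$ are pairwise vertex-disjoint yet lie inside a strong clique, so for every pair $(i,j)$ with $i \neq j$ there is a \emph{connector} edge $c_{ij} \in E(G)$ with one endpoint in $\{u_i, v_i\}$ and the other in $\{u_j, v_j\}$.

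The first step is to show $m \leq 2k-1$ via a case analysis on how connectors attach to each blob. Call $e_i$ \emph{aligned} if there is a single vertex $w_i \in \{u_i, v_i\}$ such that every connector $c_{ij}$ (for $j \neq i$) has $w_i$ as endpoint. In the fully-aligned scenario the attachment points $\{w_1, \dots, w_m\}$ together with the connectors contain a copy of $K_m$ in $G$, so $m \geq 2k$ would immediately yield a $C_{2k}$, contradicting the hypothesis. In a misaligned scenario, some blob has connectors using both its endpoints; by routing a simple closed walk through a chosen subset of $r$ blobs and toggling, independently at each traversed blob, whether or not to use its intra-blob edge, one can realize cycles whose lengths span $\{r, r+1, \dots, 2r\}$. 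For $r = k+1$ this range is $\{k+1, \dots, 2k+2\}$, which covers the forbidden window $\{2k, 2k+1, 2k+2\}$. The contradiction forces $m \leq 2k-1$.

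With $m \leq 2k-1$ and (essentially) full alignment, the attachment set $W = \{w_1, \dots, w_m\}$ is a clique of size at most $2k-1$ in $G$. I would then show that at most two edges of $M$ fail to be incident to $W$: such an edge $f$ has both endpoints outside $W$ yet must be strongly adjacent to every $e_j$, which tightly constrains its placement via the forbidden cycles, limiting the count by a small constant. Since $W$ is a clique of size at most $2k-1$ in a graph of maximum degree $\Delta$, the number of edges of $G$ incident to $W$ is at most $(2k-1)\Delta - \binom{2k-1}{2} = (2k-1)(\Delta-k+1) \leq (2k-1)(\Delta-1)$ (using $k \geq 2$). Adding the at most two exceptional edges gives $|M| \leq (2k-1)(\Delta-1)+2$.

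The main obstacle is the cycle construction in the misaligned case. Producing a genuinely \emph{simple} cycle (no repeated vertex) of length in the narrow window $\{2k, 2k+1, 2k+2\}$, under adversarial placement of connectors, requires careful routing—one must choose the cyclic order of blobs so that intra-edge toggles are genuinely independent and the target length is exactly hit. The fact that three consecutive cycle lengths are forbidden is precisely what makes the argument robust: the window absorbs the unit parity shifts caused by each intra-edge toggle, and without all three lengths one cannot reliably land inside it regardless of the connector configuration.
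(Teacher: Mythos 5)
Your proposal only addresses part~\ref{itm:3cycles,even}; parts~\ref{itm:C4} and~\ref{itm:even} are not treated at all, and the paper handles them with entirely separate arguments (a Tur\'an-type lemma for $P_{2k+1}$-free bipartite graphs in the $C_{2k}$-free case, and an intricate case analysis in the $C_4$-free case). For part~\ref{itm:3cycles,even} your route via a maximal matching $M_0\subseteq M$ is genuinely different from the paper's. The paper instead proves the stronger Theorem~\ref{thm:3cycles}: if $G$ is $P_{\kappa+1}$-free then $\omega(L(G)^2)\le(\kappa-2)(\Delta-1)+2$, established by greedily growing a path $W^*$ whose two end-edges lie in $H$ until $I_H(W)\cup\{x_1y_I\}$ exhausts $E(H)$; then a $\{C_{\ell-1},C_\ell,C_{\ell+1}\}$-free graph cannot contain an $H$-ended $P_{\ell+1}$, so $|W^*|\le\ell-1$ and the $+2$ falls out for free. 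That argument needs no case analysis on how cross-edges attach.

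Your sketch has two genuine gaps, both in the misaligned case. The ``independent toggling'' claim is false as stated: once a cyclic order on the $r$ blobs is fixed, whether blob $e_i$ contributes one vertex or two to the constructed cycle is \emph{determined} by whether its incoming and outgoing connectors meet $\{u_i,v_i\}$ at the same endpoint or at different endpoints; there is no free bit to toggle. To rescue the argument you would have to show that across all choices of $r$ and of cyclic order on a subset of blobs, some attainable cycle length lands in $\{2k,2k+1,2k+2\}$ — a nontrivial combinatorial lemma that you do not prove, and precisely in the regime (misalignment) where the clique-of-attachment-points picture breaks down. Second, the ``at most two exceptional edges'' step is asserted without justification, and it is not clear why a constant bound should hold; note that maximality of $M_0$ only makes $V(M_0)$ (size $2m$, up to $4k-2$) a vertex cover of $M$, not $W$ (size $\le 2k-1$), so the set of edges you must dismiss as exceptional could a priori be of order $\Delta$. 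Until both of these are filled, the proposal does not establish the bound.
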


We invite the reader to notice the qualitative difference between excluding an odd cycle length versus an even one. In the latter case, Theorem~\ref{thm:main,even}\ref{itm:even} and Theorem~\ref{thm:Mah} combine to reveal an asymptotic difference in extremal behaviour between the strong clique number and the strong chromatic index. In the former case, it is conjectured that there is no such difference.

We have delved a little further by considering the effect of forbidden (even) cycles within the class of bipartite graphs. For this specific case, we propose the following.

\begin{conjecture}\label{conj:bip}
For a $C_{2k}$-free bipartite graph $G$ with $\DeltaG=\Delta$, $\omega_2'(G) \le k(\Delta-1)+1$.
\end{conjecture}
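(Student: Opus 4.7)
The plan is to pick a central edge $e_0 = a_0 b_0 \in M$ (with $a_0 \in A$, $b_0 \in B$) and analyze $M$ based on how its edges connect to $e_0$. By bipartiteness, every $e = ab \in M \setminus \{e_0\}$ is either incident to $e_0$ or admits a connecting edge of the form $a b_0$ or $a_0 b$. This yields a decomposition $M \setminus \{e_0\} = D \cup M_X \cup M_Y$ (with $M_X \cap M_Y$ possibly nonempty), where $D$ consists of edges incident to $e_0$, $M_X$ of edges $ab$ with $a \in N(b_0) \setminus \{a_0\}$, and $M_Y$ of edges $ab$ with $b \in N(a_0) \setminus \{b_0\}$.

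First I would settle the base case $k = 2$. Here $C_4$-freeness forces $M_X \cap M_Y = \emptyset$ and yields two structural claims: (i) all edges in $M_X$ share a common $A$-endpoint, otherwise two $M_X$-edges with distinct $A$-endpoints together with their connecting edge would give a second common $B$-neighbor of those endpoints beyond $b_0$, a $C_4$; and (ii) if $M_X \neq \emptyset$, then $D$ contains no edge at $a_0$ other than $e_0$, by a parallel $C_4$-argument on a strong-adjacency witness in $M_X$. The symmetric claims for $M_Y$ and a short case split give $|M| \le 2(\Delta-1)+1$, matching the conjectured bound.

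For $k \ge 3$, the extremal construction matching the bound is a \emph{hairy} $K_{k-1, \Delta}$: take $K_{k-1, \Delta}$ (which is $C_{2k}$-free since its longest cycle has length $2(k-1)$) and attach $\Delta - k + 1$ pendant edges at a single vertex on the $\Delta$-side; all $(k-1)\Delta + (\Delta - k + 1) = k(\Delta-1)+1$ edges form a strong clique. My plan for the upper bound is an induction on $k$ based on peeling off the star at a vertex of maximum $M$-degree. If $v$ is such a vertex and $\deg_M(v) = \Delta$ (the interesting case, say $v \in B$), then a short strong-adjacency argument shows that every residual edge $e = ab \in M \setminus \{\text{edges at }v\}$ has $a \in N(v)$: otherwise strong-adjacency with all $\Delta$ edges at $v$ forces $N(v) \subseteq N(b)$, so $b$ is a twin of $v$ and hence $a \in N(b) = N(v)$, a contradiction. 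Thus the residual strong clique $M'$ lives in $G[N(v), B \setminus \{v\}]$, and if the induction delivers $|M'| \le (k-1)(\Delta-1)$, then $|M| \le \Delta + (k-1)(\Delta-1) = k(\Delta-1)+1$.

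The hard part will be extracting a $C_{2(k-1)}$-forbidding property of the residual subgraph, since restricting $G$ to $G[N(v), B \setminus \{v\}]$ does not literally shorten any cycle. The key step I would aim for is an \emph{augmentation lemma}: any $C_{2(k-1)}$ in the residual can be combined with $v$ (whose neighborhood contains every $A$-endpoint of the residual) to yield a $C_{2k}$ in $G$, via a careful insertion of a detour through $v$ that exploits a common-neighborhood configuration between $v$ and the $B$-vertices of the residual $C_{2(k-1)}$. If this augmentation fails then the failure itself forces additional twin/common-neighborhood structure on the residual, which would give the induction directly. I would first verify the whole argument for $k = 3$ (bipartite $C_6$-free, bound $3\Delta-2$, extremal hairy $K_{2,\Delta}$) to calibrate the augmentation, and then attempt the general induction; the sub-maximal case $\deg_M(v) < \Delta$ should fall out by a routine degree-sum argument once the maximal case is settled.
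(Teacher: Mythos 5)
You are attempting to prove Conjecture~\ref{conj:bip}, which is \emph{open} in the paper; the authors only establish the weaker Theorem~\ref{thm:bip}, which additionally excludes $C_3$, $C_5$ and $C_{2k+2}$ and gives the bound $\max\{k\Delta,2k(k-1)\}$, via Lemma~\ref{lem:reduction} and a path-extension argument (Theorem~\ref{thm:bip,reduced}), not an induction on $k$. So there is no proof in the paper to compare yours against, and a correct proof of the conjecture as stated would be a genuine advance. Your $k=2$ base case (bipartite $C_4$-free, bound $2\Delta-1$) appears sound, but it is not where the difficulty lies.

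The inductive step has a real gap exactly where you flag one. The augmentation lemma --- that any $C_{2(k-1)}$ in the residual graph lifts through $v$ to a $C_{2k}$ in $G$ --- is \emph{false} on the extremal hairy $K_{k-1,\Delta}$ itself. Take $v$ to be the $\Delta$-side vertex carrying the pendants: then $\deg_M(v)=\Delta$, the residual strong clique is the edge set of $K_{k-1,\Delta-1}$, and for $\Delta\ge k$ this graph contains a $C_{2(k-1)}$. No such cycle extends through $v$: every $B$-vertex of the residual has neighbourhood equal to $\{a_1,\dots,a_{k-1}\}$, and the pendants at $v$ have degree one, so there is no fresh $A$-vertex in $N(v)$ to splice in. Your fallback (``failure forces twin/common-neighbourhood structure, giving the induction directly'') is therefore not a corner case but \emph{is} the extremal configuration, and it is not worked out. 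Moreover, even if one peels instead at an $A$-side vertex of $M$-degree $\Delta$ (where the residual \emph{is} $C_{2(k-1)}$-free), the residual graph still has maximum degree $\Delta$, so the inductive hypothesis gives only $|M'|\le(k-1)(\Delta-1)+1$ and hence $|M|\le k(\Delta-1)+2$, one over the target. The induction as stated therefore requires a bound strictly sharper than the conjecture itself, which you would need to formulate and establish.
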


\noindent
If true, this is sharp (for $\Delta\ge k-1$) by considering a complete bipartite graph $K_{k-1,\Delta}$ with parts of size $k-1$ and $\Delta$, to one vertex in the part of size $\Delta$ is attached $\Delta-k+1$ pendant edges.
In support of Conjecture~\ref{conj:bip}, we have the following result.

\begin{theorem}\label{thm:bip}
For a $\{C_3,C_5,C_{2k},C_{2k+2}\}$-free graph $G$ with $\DeltaG=\Delta$, $\omega_2'(G) \le \max\{k\Delta, 2k(k-1)\}$.
\end{theorem}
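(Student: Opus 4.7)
The plan is to split on $|V(S)|$. If $|V(S)| \le 2k$, then triangle-freeness of $G$ and Mantel's theorem yield $|S| \le \lfloor |V(S)|^2/4 \rfloor \le k^2 \le 2k(k-1)$ (the latter valid for $k \ge 2$), matching the claimed bound.

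Assume $|V(S)| > 2k$ and fix any edge $e^* = uv \in S$. Every edge of $S$ is incident to $e^*$ or joined to $e^*$ by a single edge of $G$, so $V(S) \subseteq N^{\le 2}[\{u,v\}]$, whose diameter in $G$ is at most $5$. The key lemma I would prove is that $G[V(S)]$ is bipartite. Since cycles in $V(S)$ have length at most $10$ and $C_3, C_5$ are forbidden, any odd cycle of $G[V(S)]$ has length $7$ or $9$. I case-split on the position of $u, v$ relative to such a putative cycle $C$: an ``opposite'' vertex of $C$ (at cycle-distance $\lfloor |C|/2 \rfloor$ from $u$ and $v$) must have $G$-distance at most $2$ to $\{u, v\}$, and closing the shortcut through $uv$ along one of the two cycle-arcs produces a $C_5$ (always forbidden) or one of $C_{2k}, C_{2k+2}$ (forbidden by hypothesis), yielding a contradiction.

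Given bipartiteness, K\"onig's theorem gives $\tau(S) = \nu(S)$, so $|S| \le \nu(S) \cdot \Delta$. It remains to show $\nu(S) \le k$. Suppose for contradiction that $f_i = a_i b_i$ ($1 \le i \le k+1$) is a set of $k+1$ pairwise disjoint edges of $S$, with $a_i, b_i$ in opposite parts of the bipartition. For each pair $i \neq j$, the connection of $f_i$ and $f_j$ yields an edge $a_i b_j$ or $a_j b_i$, which I encode as an arc in a semi-complete digraph $D$ on $[k+1]$. If $D$ is strongly connected, the Camion--Moon theorem supplies a Hamilton directed cycle in $D$, translating into a $C_{2k+2}$ in $G$ which is forbidden. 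Otherwise $D$ admits strongly connected components $C_1 < C_2 < \cdots < C_p$ with $p \ge 2$; each $D[C_i]$ is strongly connected semi-complete and therefore admits its own Hamilton cycle. Labelling each SCC consistently with the arcs of its Hamilton cycle, I chain these Hamilton cycles together via the crossing arcs between consecutive SCCs, and close through the arc from $\min C_1$ to $\max C_p$, to obtain an alternating cycle in the bipartite subgraph on $\{a_i, b_i\}_{i \in [k+1]}$ that uses $k$ $a$-vertices and $k$ $b$-vertices, i.e., a $C_{2k}$ in $G$ --- again forbidden. Hence $\nu(S) \le k$ and $|S| \le k\Delta$.

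The main obstacle is the bipartiteness lemma: while $C_7$ is ruled out cleanly by the $C_5$-shortcut in most configurations, the $C_9$ analysis (and for some $k$ the $C_7$ analysis where $u, v$ lie off the cycle) demands a careful combination of the $\{C_3, C_5\}$-freeness with the specific $C_{2k}, C_{2k+2}$ exclusions in order to rule out every shortcut configuration. The matching bound, by contrast, is a direct semi-complete-digraph construction once the bipartition is in hand.
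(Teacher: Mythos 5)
Your proposal takes a genuinely different route from the paper. The paper's proof of Theorem~\ref{thm:bip} has the same opening move as yours --- reduce to the bipartite case via Lemma~\ref{lem:reduction} --- but then proves the bipartite bound (Theorem~\ref{thm:bip,reduced}) by a delicate path-extension and ``obsolete edge'' counting argument. Your replacement --- bound the matching number $\nu(S)$ of the strong clique and finish by K\"onig's theorem plus the degree bound, so $|S|\le\nu(S)\Delta$ --- is a shorter and cleaner idea, and if it goes through it even yields $\omega_2'(G)\le k\Delta$ outright, making both the $2k(k-1)$ term and your Mantel case unnecessary. That said, both of your key steps have genuine gaps as written.

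For the bipartiteness of $G[V(S)]$, the inference ``diameter at most $5$, hence cycles of length at most $10$'' is simply false: small diameter does not bound cycle length (complete bipartite graphs have diameter $2$ and cycles of every even length), so you may face odd cycles of any length and your $C_7$/$C_9$ case analysis does not cover them. You also do not need the $C_{2k},C_{2k+2}$ exclusions for this step: $\{C_3,C_5\}$-freeness alone suffices, which is exactly what Lemma~\ref{lem:reduction} asserts. The clean argument is to $2$-colour each $w$ in the distance-$2$ ball around $\{u,v\}$ by the parity of $d_G(w,u)$ (which is at most $3$). An edge $xy$ with $d(x,u)=d(y,u)=d$ forces, for $d=1$, a $C_3$; for $d=2$, a $C_3$ or $C_5$; and for $d=3$, necessarily $d(x,v)=d(y,v)=2$, which reduces to the $d=2$ case with $v$ in place of $u$.

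For $\nu(S)\le k$, the SCC-chaining as described does not work: a singleton SCC has no Hamilton cycle, and the arc from $\min C_1$ to $\max C_p$ runs \emph{with} the topological order of the SCCs, so it cannot close a walk that has already passed from $C_1$ to $C_p$. The fix is simpler than what you wrote: every semi-complete digraph has a Hamilton \emph{path} $i_1\to i_2\to\cdots\to i_{k+1}$. Look at the arc between $i_1$ and $i_{k+1}$. If $i_{k+1}\to i_1$, then $a_{i_1}b_{i_2}a_{i_2}\cdots a_{i_{k+1}}b_{i_1}$ is a $C_{2k+2}$ in $G$; if $i_1\to i_{k+1}$, then $a_{i_1}b_{i_2}a_{i_2}\cdots a_{i_k}b_{i_{k+1}}$ (dropping $a_{i_{k+1}}$ and $b_{i_1}$) is a $C_{2k}$ in $G$. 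Either way a forbidden cycle, so $\nu(S)\le k$. With both repairs made, your approach is correct and gives a conceptually simpler proof than the paper's Theorem~\ref{thm:bip,reduced}.
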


\noindent
This is nearly sharp by the example mentioned just above.
A small step in the proof is a curious property of maximising the strong clique number: if we are interested in a class of graphs that are $\{C_3,C_5\}$-free, then we may as well exclude all other odd cycle lengths at the same time (Lemma~\ref{lem:reduction}). This same reduction easily implies a result intermediary to Theorems~\ref{thm:FSGT} and~\ref{thm:main,odd}\ref{itm:C5}.

Let us remark that in general (i.e.~without a cycle restriction) the bound $\omega_2'(G) \le \frac54\Delta^2$ remains conjectural. \'Sleszy\'nska-Nowak~\cite{Sle16} showed a bound of $\frac32\Delta^2$, which was improved to $\frac43\Delta^2$ by Faron and Postle~\cite{FaPo19+}.


\subsection*{Notational conventions}

Let $G$ be a graph or multigraph.
$V(G)$ denotes its vertex set.
$E(G)$ denotes its edge set.
$L(G)$ denotes its line graph.
We write $|G|$ for $|V(G)|$ and $e(G)$ for $|E(G)|=|L(G)|$.

Given a subset $S\subseteq V(G)$, the sub(multi)graph induced by $S$ is denoted by $G[S]$.
Given disjoint subsets $S_1,S_2\subseteq V(G)$, the bipartite sub(multi)graph induced by the edges between $S_1$ and $S_2$ is denoted by $G[S_1\times S_2]$.
We write $E_G(S_2, S_2)$ for $E(G[S_1\times S_2])$ and $e_G(S_2, S_2)$ for $e(G[S_1\times S_2])$.
We write $I_G(S)$  for $E(G[S]) \cup E(G[S\times (V(G)\setminus S)])$, the set of edges incident to $S$.
The neighbourhood $N_G(S)$ of $S$ is the set $\{v' \mid v\in S, v'\notin S, vv'\in E(G)\}$ of neighbours of $S$.
For a vertex $v\in V(G)$, we write $N_G(v)$ instead of $N_G(\{v\})$.
The closed neighbourhood $N_G[v]$ of $v$ is $\{v\}\cup N_G(v)$.
The degree $\deg_G(v)$ of $v$ is $|I_G(\{v\})|$ (and so $\deg_G(v)=|N_G(v)|$ if $G$ is a simple graph).
When the context is clear, we sometimes drop the subscript $G$.

The distance between two vertices is the length of (i.e.~the number of edges in) a shortest path in $G$ that joins them. The distance between two edges is their distance in $L(G)$. The distance between a vertex and an edge is the smaller of the distances between the vertex and the endpoints.
The square $G^2$ of $G$ is formed from $G$ by adding an edge for every pair of vertices that are at distance $2$.
Note $\omega_2'(G)=\omega(L(G)^2)$ and $\chi_2'(G)=\chi(L(G)^2)$ where $\omega$ denotes the clique number and $\chi$ the chromatic number.

\section{No triangles or no $5$-cycles}\label{sec:triangle}

This section is devoted to showing parts~\ref{itm:triangle} and~\ref{itm:C5} of Theorem~\ref{thm:main,odd}.
A common element of the proof is a lemma about the {\em Ore-degree} $\sigmaG$ of $G$, the largest over all edges of $G$ of the sum of the two endpoint degrees.
The following generalises a recent result due to Faron and Postle~\cite{FaPo19+}.

\begin{theorem}\label{thm:C5}
For a $C_5$-free graph $G$, $\omega_2'(G) \le \frac14\sigmaG^2$.
\end{theorem}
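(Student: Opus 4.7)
The plan is to fix a maximum strong clique $M$ of $G$ and pick an edge $e = uv \in M$ maximising $d(u) + d(v)$ over the edges of $M$. Writing $p = d(u)$ and $q = d(v)$, so that $p+q \le \sigmaG$, I aim to show $|M| \le pq$; then $pq \le ((p+q)/2)^2 \le \sigmaG^2/4$ by AM--GM, yielding the theorem.

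Partition $M$ as $\{e\} \sqcup M_u \sqcup M_v \sqcup M_0$, where $M_u$ (resp.~$M_v$) consists of the edges of $M$ containing $u$ (resp.~$v$) but different from $e$, and $M_0$ consists of the edges of $M$ disjoint from $\{u,v\}$. Since $|M_u| \le p-1$ and $|M_v| \le q-1$, the bound $|M| \le pq$ reduces to showing $|M_0| \le (p-1)(q-1)$. Writing $A = N(u) \setminus \{v\}$ and $B = N(v) \setminus \{u\}$ (so $|A| \le p-1$ and $|B| \le q-1$), the natural route is to construct an injection $\phi \colon M_0 \hookrightarrow A \times B$.

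Every $f \in M_0$ has at least one endpoint in $A \cup B$, because $f$ is joined to $e$ by an edge of $G$. For the ``bipartite-type'' edges of $M_0$ (one endpoint in $A$, one in $B$), $\phi$ has a natural definition. The delicate edges of $M_0$ are those with both endpoints in $A$ (forming a triangle through $u$), both in $B$, or only one endpoint in $A \cup B$. The $C_5$-freeness of $G$ is crucial here: for instance, if $f = xy \in M_0$ with $x \in A$ and $y \notin A \cup B$, then any edge $yw$ with $w \in B$ would close a $5$-cycle $u\,x\,y\,w\,v$; hence $y$ has no neighbour in $B$, and any connection of $f$ to an edge of $M_v$ must pass through $x$. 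Symmetric $5$-cycle analyses (e.g.\ $v\,u\,x\,y\,z$ for $z \in N(y) \cap B$ in the triangle case) rule out the remaining awkward configurations.

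The main obstacle will be to combine these individual structural restrictions into a single coherent injection. A plausible strategy is to first assign the bipartite-type edges, then route the remaining edges of $M_0$ into unused pairs of $A \times B$ using the $C_5$-free constraints, for example via a greedy assignment or a Hall-type matching argument on an auxiliary bipartite graph encoding the allowable $(a,b)$-choices for each leftover $f$. The bound $|M| \le pq$ would be tight on the balanced complete bipartite graphs $K_{p,p}$, which also shows that Theorem~\ref{thm:C5} is sharp.
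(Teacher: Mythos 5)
The proposal is a sketch, not a proof: the central step --- constructing an injection $\phi\colon M_0\hookrightarrow A\times B$ --- is never carried out, and you acknowledge this yourself (``the main obstacle will be to combine these individual structural restrictions into a single coherent injection''). What you offer instead is a sample $C_5$-freeness observation for one configuration (the case $x\in A$, $y\notin A\cup B$) and a gesture toward a ``greedy'' or ``Hall-type'' argument, neither of which is formulated. That is a genuine gap, because the difficulty is concentrated precisely there: different $f\in M_0$ compete for the same pairs in $A\times B$, the constraints imposed by the edges of $M_u$ and $M_v$ interact globally, and when $A\cap B\ne\emptyset$ several of the ``clean'' $5$-cycle arguments degenerate (the five vertices need not be distinct, e.g.\ when the putative $C_5$ revisits a vertex of $A\cap B$). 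Without an explicit Hall condition, or a concrete case-by-case assignment that one can check is well defined and injective, there is no proof that $|M_0|\le(p-1)(q-1)$.

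For comparison, the paper does \emph{not} build such an injection. It fixes a vertex $v$ of maximum degree $\DeltaH$ in the clique subgraph $H$ (not an edge maximising the degree sum), shows via $C_5$-freeness that every edge $st\in E(H)$ with $s,t\notin N_G(v)$ has one endpoint complete to $N_H(v)$ and the other anticomplete, extracts the set $S$ of such ``complete'' endpoints, and then simply covers $E(H)$ by three sets ($I_H(S)$, $I_H(N_G(v)\setminus N_H(v))$, $I_H(N_H(v))\setminus I_H(S)$), bounding each by $\DeltaH$ times an appropriate vertex count. Summing gives $e(H)\le\DeltaH(\sigma_G(H)-\DeltaH)\le\frac14\sigma_G(H)^2$. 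This vertex-centred covering argument sidesteps the combinatorial bookkeeping that your injection would require. If you want to salvage your edge-centred route to $|M|\le pq$, you would need to make the auxiliary bipartite graph and the Hall condition explicit and verify them --- in particular handling the overlap $A\cap B$ and the interaction with $M_u$, $M_v$ carefully --- none of which is present here.
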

\noindent
This directly implies Theorem~\ref{thm:main,odd}\ref{itm:C5} (and hence Theorem~\ref{thm:FSGT}) because the sum of two degrees in $G$ is always at most $2\DeltaG$.
Theorem~\ref{thm:C5} follows from a slightly more technical version.
For a sub(multi)graph $H$ of a (multi)graph $G$, the Ore-degree $\sigma_G(H)$ of $H$ in $G$ is $\max_{xy \in E(H)} (\deg_G(x) + \deg_G(y))$.

\begin{lemma}
\label{lem:C5}
If $G$ is a $C_5$-free multigraph and $H$ is a submultigraph of $G$ such that $E(H)$ is a clique in $L(G)^2$, then $e(H) \le \DeltaH (\sigma_G(H)- \DeltaH) \le  \frac14\sigma_G(H)^2.$
\end{lemma}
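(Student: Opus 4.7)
The plan is to fix a vertex $w\in V(H)$ with $\deg_H(w)=\Delta:=\Delta_H$, list its $H$-neighbours as $N_H(w)=\{z_1,\ldots,z_\Delta\}$, and set $d:=\deg_G(w)\ge\Delta$ and $S:=\sigma_G(H)$. Since every edge $wz_i$ lies in $H$, the Ore-degree hypothesis gives $\deg_G(z_i)\le S-d$, whence $\sum_{i=1}^\Delta \deg_H(z_i)\le \Delta(S-d)$. I then partition $E(H)$ into three classes: $(a)$ edges incident to some vertex of $\{w\}\cup N_H(w)$; $(b)$ remaining edges with at least one endpoint in $N_G(w)\setminus N_H(w)$; and $(c)$ remaining edges, both of whose endpoints lie outside $N_G[w]$. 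Inclusion--exclusion gives $|(a)|=\sum_i \deg_H(z_i)-e(H[N_H(w)])$, and a routine count yields $|(b)|\le\Delta(d-\Delta)$.

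The main obstacle is to bound class $(c)$. For each such edge $f=xy$, being at $L(G)$-distance at most $2$ from every $wz_i$---noting that $f$ is vertex-disjoint from $\{w,z_i\}$ and neither $x$ nor $y$ lies in $N_G(w)$---forces $z_i\in N_G(x)\cup N_G(y)$ for all $i$. Here $C_5$-freeness is decisive: were there $z_i\in N_G(x)\setminus N_G(y)$ and $z_j\in N_G(y)\setminus N_G(x)$ with $i\ne j$, then $w\,z_i\,x\,y\,z_j\,w$ would form a $5$-cycle in $G$, contradicting the hypothesis. Hence, up to swapping $x$ and $y$, we get $N_H(w)\subseteq N_G(x)$, so the $\Delta$ edges $xz_i$ all lie in $E(G)$. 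Charging each class-$(c)$ edge to the ``missing'' incidences $xz_i\in E(G)\setminus E(H)$ yields $|(c)|\le \sum_i\bigl(\deg_G(z_i)-\deg_H(z_i)\bigr)$.

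Combining the three bounds with the termwise inequality
\[
\Delta(S-\Delta)-\sum_i\deg_H(z_i)\ \ge\ \Delta(d-\Delta) + \sum_i\bigl(\deg_G(z_i)-\deg_H(z_i)\bigr),
\]
(which follows from $\deg_G(z_i)\le S-d$ and $\deg_H(z_i)\le \deg_G(z_i)$) gives $e(H)\le \Delta(S-\Delta) - e(H[N_H(w)])\le\Delta(S-\Delta)$, and the second inequality $\Delta(S-\Delta)\le S^2/4$ is AM--GM applied to $\Delta+(S-\Delta)=S$. The corner cases $\Delta\le 1$, where the $5$-cycle argument lacks distinct $z_i,z_j$, reduce trivially (when $\Delta=1$, $H$ is a matching and the endpoints of all its edges fit inside $N_G[u]\cup N_G[v]$, a set of size at most $S$, for any chosen $uv\in E(H)$). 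Multigraph bookkeeping on edge multiplicities is straightforward and does not affect the structure of the argument.
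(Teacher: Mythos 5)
Your overall strategy mirrors the paper's: pick a vertex $w$ of maximum $H$-degree, use $C_5$-freeness to force every ``far'' $H$-edge to have one endpoint complete to $N_H(w)$, and then account for edges by where they meet $N_G[w]$; the bookkeeping differs (the paper introduces the set $S$ of ``full'' vertices and cancels $|S|$ between two estimates, while you charge directly against the surplus $\sum_i(\deg_G(z_i)-\deg_H(z_i))$), but this is the same argument in a slightly different dress, and the arithmetic checks out.

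However, there is a genuine gap in the multigraph handling. Your dismissal ``multigraph bookkeeping on edge multiplicities is straightforward and does not affect the structure of the argument'' is not correct: the $C_5$ step requires two \emph{distinct} vertices $z_i\neq z_j$ in $N_H(w)$, and in a multigraph one can have $\Delta_H\geq 2$ while $|N_H(w)|=1$ (all $\Delta_H$ $H$-edges at $w$ parallel to a single neighbour). In that case the list $\{z_1,\ldots,z_\Delta\}$ is not $\Delta$ distinct vertices, the $C_5$ argument is unavailable, and the class-$(c)$ bound breaks down. Your corner-case treatment only covers $\Delta_H\leq 1$, which for simple graphs coincides with $|N_H(w)|\leq 1$ but for multigraphs does not; the paper resolves precisely this case by a short separate count. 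A secondary, more cosmetic point: the assertion ``the missing incidences $xz_i\in E(G)\setminus E(H)$'' is not literally true, since a full vertex $x$ may well satisfy $xz_i\in E(H)$ for some $i$. The bound $|(c)|\leq\sum_i(\deg_G(z_i)-\deg_H(z_i))$ is still correct, but the charging must be phrased so that each full vertex $x$, having $a_x$ $H$-edges into $N_H(w)$, is incident to at most $\Delta-a_x$ class-$(c)$ edges and contributes exactly $\Delta-a_x$ to the right-hand side; as written, the injectivity of the charging is not established.
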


\noindent
Before proving this, we first show how it yields Theorem~\ref{thm:main,odd}\ref{itm:triangle}. (In fact, we only need the weaker bipartite version due to Faron and Postle~\cite{FaPo19+}.)

\begin{proof}[Proof of Theorem~\ref{thm:main,odd}\ref{itm:triangle}]
Let $G$ be a triangle-free graph with $\DeltaG=\Delta$.
Let $H$ be a vertex-minimal subgraph of $G$ whose edges form a maximum clique in $L(G)^2$. 
Let $v \in V(G)$ be a vertex satisfying $\deg_H(v)=\DeltaH$.
From now on we call $H$ and its edges \textit{blue}.
Let $G_T = G[V_T]$ and $H_T=H[V_T]$, where $V_T = V(H)\setminus N_G[v]$. 

Let $C_1,C_2,\ldots$ denote the connected components of $H_T$ that contain at least one edge. Fix one such component $C_i$ and let $pq$ be an edge in $C_i$. For all $x \in N_H(v)$, the blue edges $xv$ and $pq$ must be within distance $2$. They are not incident, so either $xp \in E(G)$ or $xq \in E(G)$, but we cannot have both since $G$ is triangle-free. It follows that $N_H(v)$ can be partitioned into $A_i= N_G(p) \cap N_H(v)$ and $\overline{A_i}= N_G(q) \cap N_H(v)$. We will call $\left\{A_i, \overline{A_i} \right\}$ the partition induced by $pq$. Now suppose $C_i$ contains an edge $e$ which is incident to $pq$. Then since $G$ is triangle-free, $e$ and $pq$ must induce the same partition. It follows inductively that all edges in $C_i$ induce the \textit{same} partition $\left\{A_i,\overline{A_i}\right\}$ of $N_H(v)$.
Figure~\ref{fig:trianglefree} illustrates this structure.

\begin{figure}
 \begin{center}
   \begin{overpic}[width=0.55\textwidth]{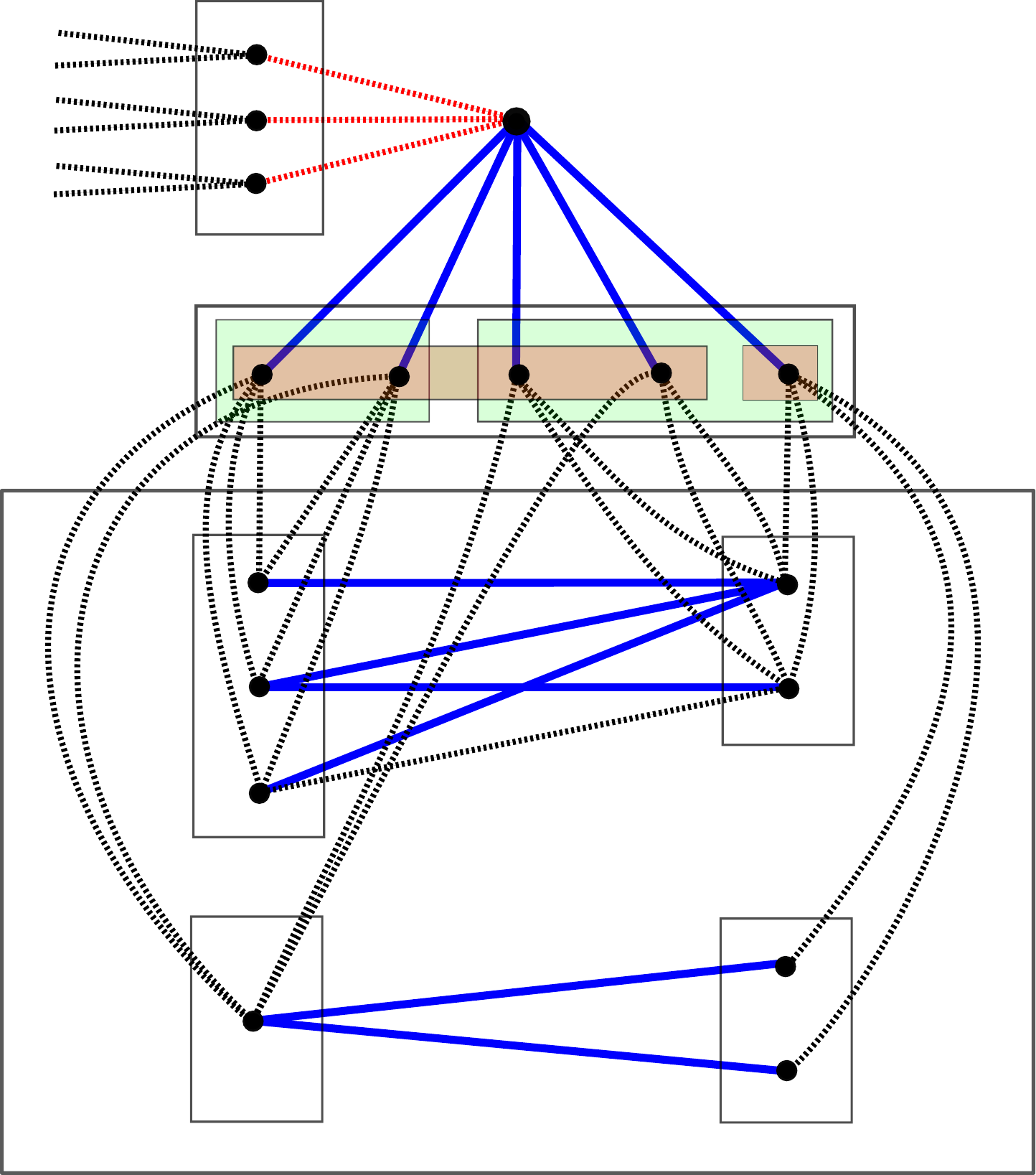}
\put (43,92){$v$}
\put (1,68){$N_H(v)$}
\put (-9,30){$G_T$}
\put (12,102){$N_{G\setminus H}(v)$}
\put (9,44){$X_1$}
\put (74,44){$Y_1$}
\put (9,12){$X_2$}
\put (74,12){$Y_2$}
 \end{overpic}

  \caption{ The structure described in the proof of Theorem~\ref{thm:main,odd}\ref{itm:triangle}. Blue edges are in $H$, red edges are in $G$ but not in $H$, and black edges could be either. In this picture, $H_{\bip}\subseteq G_T$ has two (blue) connected components, induced by $X_1 \cup Y_1$ respectively $X_2 \cup Y_2$. The blue neighbourhood $N_H(v)$ is partitioned into two sets $A_1$ (its left two vertices) and $\overline{A_1}$ (the remaining three vertices on the right), such that $X_1$ is complete to $A_1$ and $Y_1$ is complete to $\overline{A_1}$. The neighbourhoods of $X_2$ and $Y_2$ induce another partition of $N_H(v)$. \textit{Not all} edges are depicted here. In particular, we have left out the (possibly red) edges inside $G_T$ that ensure that all of its blue edges are within distance $2$.
   \label{fig:trianglefree}}
\end{center}
\end{figure}

Let $C_1,\ldots, C_k$ be the components (if they exist) that induce the trivial partition $\left\{\emptyset, N_H(v) \right\} $. Let $M=|C_1|+ \cdots +|C_k|$ denote the number of edges that are in these `trivial' components. On the other hand, let $G_{\bip} = G[\bigcup_{i\ge k+1} V(C_i)]$ and $H_{\bip} = H[\bigcup_{i\ge k+1} V(C_i)]$ be the graphs induced by the remaining `nontrivial' components.

\begin{claim}\label{clm:1}
$M \le (\Delta-\DeltaH) \Delta$.
\end{claim}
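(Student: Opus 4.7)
The plan is to exploit the bipartite structure that the trivial partition imposes on each $C_i$, then charge edges to one specific endpoint via a degree-budget argument. Assume $\Delta_H\ge 1$, since the case $\Delta_H=0$ is vacuous ($H$ has no edges). For any edge $pq$ of a trivial component $C_i$, the partition $\{\emptyset,N_H(v)\}$ forces exactly one endpoint — call it $q$ — to be adjacent in $G$ to every vertex of $N_H(v)$, and the other endpoint $p$ to have no $G$-neighbour in $N_H(v)$; I will call these vertices \emph{a-type} and \emph{b-type} respectively. Since $G$ is triangle-free and $N_H(v)\neq\emptyset$, no two a-type vertices can be adjacent (any common neighbour in $N_H(v)$ would close a triangle), so every edge of every $C_i$ has exactly one a-type endpoint. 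Let $Q$ denote the union of the a-type vertices over all trivial components, so that the $M$ edges in question are in bijection with pairs (a-type endpoint, b-type endpoint).

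Next I would bound the contribution of each $q\in Q$ to $M$. Since $N_H(v)\subseteq N_G(q)$ by the a-type condition, the remaining budget is $|N_G(q)\setminus N_H(v)|\le \deg_G(q)-\Delta_H\le \Delta-\Delta_H$. Any edge of a trivial component incident to $q$ leads to a b-type vertex $p\in V_T$, which by definition satisfies $p\notin N_H(v)$; together with $v\notin N_G(q)$ (since $q\in V_T$), this places $p\in N_G(q)\setminus N_H(v)$. Therefore $q$ has at most $\Delta-\Delta_H$ b-type blue neighbours in $V_T$, and so contributes at most $\Delta-\Delta_H$ edges to $M$.

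Finally, I would bound $|Q|$ by fixing any $u\in N_H(v)$: the definition of a-type gives $Q\subseteq N_G(u)\setminus\{v\}$, hence $|Q|\le \deg_G(u)-1\le\Delta-1\le\Delta$. Combining the two bounds yields $M\le |Q|(\Delta-\Delta_H)\le \Delta(\Delta-\Delta_H)$, as required. The only conceptual step is spotting that triangle-freeness rigidly forces each trivial component to be ``bipartite'' with a-type on one side and b-type on the other; once that is in hand the claim follows from a single application of the degree budget $\Delta-\Delta_H$ at each common neighbour of $N_H(v)$, so I do not anticipate any real obstacle.
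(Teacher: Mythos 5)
Your proposal is correct and follows essentially the same argument as the paper: your ``a-type'' vertices are exactly the paper's $\bigcup_{1\le i\le k}Y_i$, and you bound both $|Q|\le\Delta$ (via a common $G$-neighbour $u\in N_H(v)$) and the per-vertex contribution $\le\Delta-\Delta_H$ (via the degree budget after spending $\Delta_H$ edges into $N_H(v)$), which is the same two-step count the paper uses.
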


\begin{claim}\label{clm:2}
$\sigma_{G_{\bip}}(H_{\bip}) \le 2\Delta-\DeltaH -M/\Delta$.
\end{claim}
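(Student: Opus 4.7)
The plan is to fix an arbitrary blue edge $pq \in E(H_{\bip})$ belonging to some nontrivial component $C_i$ of $H_T$ whose induced partition of $N_H(v)$ is $\{A_i, \overline{A_i}\}$, and to bound $\deg_{G_{\bip}}(p) + \deg_{G_{\bip}}(q)$ from above by carefully accounting for the neighborhoods of $p$ and $q$ in $G$. Write $T := V(C_1) \cup \cdots \cup V(C_k)$ for the vertex set of the trivial components, and set $t_p := |N_G(p) \cap T|$ and $t_q := |N_G(q) \cap T|$. Since $p$ is adjacent in $G$ to every vertex of $A_i$ and $q$ to every vertex of $\overline{A_i}$, while the three sets $A_i$, $V(H_{\bip})$ and $T$ are pairwise disjoint, we obtain
\[ \deg_G(p) \ge |A_i| + \deg_{G_{\bip}}(p) + t_p \quad \text{and} \quad \deg_G(q) \ge |\overline{A_i}| + \deg_{G_{\bip}}(q) + t_q. \]
Summing these two inequalities and using $\deg_G(p), \deg_G(q) \le \Delta$ together with $|A_i| + |\overline{A_i}| = \DeltaH$ reduces the whole claim to showing that $t_p + t_q \ge M/\Delta$.

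The heart of the argument will be a vertex-cover estimate applied separately in each trivial component $C_j$. For each such $j$, set $S_j^p := N_G(p) \cap V(C_j)$ and $S_j^q := N_G(q) \cap V(C_j)$, so that $t_p = \sum_j |S_j^p|$ and $t_q = \sum_j |S_j^q|$. The key step I expect is to show that $S_j^p \cup S_j^q$ is a vertex cover of $C_j$: any edge $p'q' \in E(C_j)$ is blue and so must lie within distance $2$ of $pq$ in $L(G)$, yet $pq$ and $p'q'$ share no endpoint because they belong to different components of $H_T$; this forces $G$ to contain an edge between $\{p,q\}$ and $\{p',q'\}$, which places an endpoint of $p'q'$ into $S_j^p \cup S_j^q$. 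Since every vertex of $G$ is incident to at most $\Delta$ edges, every vertex cover of $C_j$ has size at least $e(C_j)/\Delta$, and hence $|S_j^p \cup S_j^q| \ge e(C_j)/\Delta$. Summing over $j$ yields $t_p + t_q \ge \sum_j e(C_j)/\Delta = M/\Delta$, and taking the maximum over $pq \in E(H_{\bip})$ completes the proof.

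The only real obstacle I anticipate is the disjointness bookkeeping inside the degree decomposition: one must check that $A_i \subseteq N_G(v)$ is separated from $V_T \supseteq V(H_{\bip}) \cup T$, and that $V(C_j)$ and $V(H_{\bip})$ are disjoint for each $j \le k$, so that no neighbor of $p$ (or $q$) is counted twice. Notably, triangle-freeness of $G$ itself is not needed at this stage — it has already been used in the preceding paragraphs to establish the bipartition structure of $N_H(v)$ induced by the components of $H_T$.
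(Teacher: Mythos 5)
Your proof is correct and follows essentially the same argument as the paper: both decompose $\deg_G(p) + \deg_G(q) \le 2\Delta$ according to the contributions from $N_H(v)$, from $V(G_{\bip})$, and from the trivial components, and your vertex-cover bound $t_p + t_q \ge M/\Delta$ is exactly the paper's (more tersely stated) observation that at least $M/\Delta$ edges must run from $\{p,q\}$ into the trivial components because $pq$ must be within distance $2$ of each of the $M$ trivial blue edges. The paper phrases the covering step in terms of edges rather than per-component vertex sets, but the idea is the same.
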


\begin{claim}\label{clm:3}
$G_{\bip}$ is bipartite.
\end{claim}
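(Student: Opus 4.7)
My plan is to establish a bipartition of $V(G_\bip)$ by consistently 2-colouring the nontrivial components of $H_T$, reducing the consistency problem to the absence of certain ``unbalanced'' component triples, which I would then rule out via triangle-freeness.

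First, I would refine each nontrivial component $C_i$ into $X_i \sqcup Y_i$, where $X_i = \{u \in V(C_i) : N_G(u) \cap N_H(v) = A_i\}$ and $Y_i$ is analogous with $\overline{A_i}$. The preceding discussion already gives $V(C_i) = X_i \sqcup Y_i$, and triangle-freeness with any element of $A_i$ (resp.\ $\overline{A_i}$), which is nonempty, forces every $G$-edge (not only every $H$-edge) inside $V(C_i)$ to go between $X_i$ and $Y_i$. Next, the strong-clique property applied to any two vertex-disjoint blue edges $pq \in C_i$, $p'q' \in C_j$ forces, via $L(G)$-distance at most $2$, at least one $G$-edge between $V(C_i)$ and $V(C_j)$. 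By triangle-freeness on $W$-sets, such a cross-edge must have one of the four types $X_iX_j, X_iY_j, Y_iX_j, Y_iY_j$, and I would verify that between a fixed pair $(C_i, C_j)$ all cross-edges are either all \emph{parallel} ($XX$ or $YY$) or all \emph{crossed} ($XY$ or $YX$), since coexistence of, say, an $X_iX_j$-edge and an $X_iY_j$-edge would force $A_i \cap A_j = \emptyset$ together with $A_i \subseteq A_j$, hence $A_i = \emptyset$.

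I would then observe that $G_\bip$ is bipartite iff the nontrivial components admit a $\pm$-labelling with parallel pairs labelled oppositely and crossed pairs labelled identically (the bipartition places $X_i$ on one side for $+$-labelled components and $Y_i$ on that side for $-$-labelled components). By the Cartwright--Harary balance theorem for complete signed graphs, such a labelling exists iff there is no triangle $C_1, C_2, C_3$ with an odd number of parallel pair-signs. I would rule out such ``unbalanced'' triples by picking any blue edges $p_iq_i \in C_i$ and applying triangle-freeness on $W$-sets to pin down, between each pair, a \emph{unique} consistent cross-edge among $p_ip_j, p_iq_j, q_ip_j, q_iq_j$; each non-vacuous unbalanced sub-case then yields three forced cross-edges sharing a common vertex triple, which form a triangle in $G$ --- a contradiction. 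Illustrative sub-cases: ``all-$P1$'' (all $A_i$'s pairwise disjoint) forces $p_1p_2, p_1p_3, p_2p_3$, a triangle on $\{p_1,p_2,p_3\}$; ``$P1+C1+C1$'' ($A_1 \cap A_2 = \emptyset$ and $A_1, A_2 \subsetneq A_3$) forces $p_1p_2, p_1q_3, p_2q_3$, a triangle on $\{p_1, p_2, q_3\}$.

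The main obstacle is this sub-case enumeration: parallel pairs decompose into three flavours (disjoint, covering, both) and crossed pairs into three (proper inclusion each way, equality), yielding several combinations; however most are vacuous from $A_i \neq \emptyset$ or $\overline{A_i} \neq \emptyset$, and the few surviving consistent unbalanced configurations each exhibit the three-edge triangle pattern above.
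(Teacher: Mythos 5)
Your proposal is correct and takes a genuinely different route from the paper's. The paper first shows that for each pair of nontrivial components, after possibly swapping $X_j\leftrightarrow Y_j$ (hence $A_j\leftrightarrow\overline{A_j}$), one of $A_i\subseteq A_j$ or $A_j\subseteq A_i$ holds, then asserts that the components may be reordered so that $A_{k+1}\subseteq A_{k+2}\subseteq\cdots$, and takes $X=\bigcup X_i$, $Y=\bigcup Y_i$ as the bipartition. You instead sign the complete auxiliary graph on components (parallel pairs negative, crossed pairs positive) and invoke the Cartwright--Harary balance criterion, eliminating each unbalanced triangle of components by exhibiting an actual triangle in $G$. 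This buys you something real: a pairwise cross-free family need not be linearizable under \emph{any} global choice of complementations (try $A_1=\{1\}$, $A_2=\{2\}$, $A_3=\{1,2\}$ inside a four-element $N_H(v)$), so the paper's ``it follows that we can reorder'' tacitly requires precisely the triangle-elimination argument that your balance condition makes explicit and forces you to supply. The costs are a somewhat more intricate case enumeration, which you only sketch, and one small imprecision: the cross-edge pinned down between $p_iq_i$ and $p_jq_j$ need not be \emph{unique} (both $p_ip_j$ and $q_iq_j$ may occur when $A_j=\overline{A_i}$), though uniqueness does hold in the surviving non-vacuous unbalanced configurations, so the desired triangle still materializes.
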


Before proving these claims, we show how they imply the theorem. Note that $E(H_{\bip})$ is a clique not only in $L(G)^2$ but also in $L(G_{\bip})^2$. So by Claim~\ref{clm:3}, we may apply Lemma~\ref{lem:C5} and then Claim~\ref{clm:2}, yielding
\[
e(H_{\bip}) \le \frac14\sigma_{G_{\bip}}(H_{\bip})^2 \le \frac14\left(2\Delta-\DeltaH- \frac{M}{\Delta}\right)^2.
\]
It follows that $\omega(L(G)^2)$ is at most
\begin{align*}
e(H)
&= |I_H(N_G(v))| + e(H_T)
 \le \DeltaH \deg_G(v) + M + e(H_{\bip}) \\
& \le \DeltaH\Delta + M + \frac14\left(2\Delta-\DeltaH- \frac{M}{\Delta}\right)^2 \\
&
= \Delta^2 + \frac{1}{4} \left(\DeltaH+ \frac{M}{\Delta} \right)^2 \le \frac{5}{4} \Delta^2,
\end{align*}
where we used Claim~\ref{clm:1} in the last line.
This concludes the proof, conditioned on Claims~\ref{clm:1}--\ref{clm:3}.

Given the $i$th component $C_i$, let $X_i$ respectively $Y_i$ denote the set of vertices in $C_i$ whose neighbourhood in $N_H(v)$ is $A_i$ respectively $\overline{A_i}$. Note that $X_i$ is complete to $A_i$ and $Y_i$ is complete to $\overline{A_i}$. Furthermore, the bipartite subgraph of $H$ induced by $C_i$ has parts $X_i$ and $Y_i$.

\begin{claimproof}[Proof of Claim~\ref{clm:1}]
If $C_i$ is a trivial component $(1\le i \le k$) then $Y_i$ is complete to $\overline{A_i}=N_H(v)$. Therefore $|\bigcup_{1\le i \le k} Y_i| \le \Delta$, and for the same reason all $y\in \bigcup_{1\le i \le k} Y_i$ satisfy $\deg_{H_T}(y)\le \Delta - \DeltaH$. So $M\le  \sum_{y\in \bigcup_{1\le i \le k} Y_i}\deg_{H_T}(y) \le \Delta (\Delta-\DeltaH)$.
\end{claimproof}

\begin{claimproof}[Proof of Claim~\ref{clm:2}]
Let $e=pq \in E(H_{\bip})$. Then for all $x \in N_H(v)$, $x$ must be adjacent to either $p$ or $q$. So there are $\deg_H(v)=\DeltaH$ edges between $\{p,q\}$ and $N_H(v)$. Also, $pq$ must be at distance $2$ of each of the $M$ edges induced by the trivial components. So there are at least $M/\Delta$ edges between $\{p,q\}$ and the trivial components. So at least $\DeltaH+ M/\Delta$ edges incident to $\{p,q\}$ are not in $G_{\bip}$. It follows that $\sigma_{G_{\bip}}(e)= \deg_{G_{\bip}}(p) + \deg_{G_{\bip}}(q) \le 2\Delta- \DeltaH - M/\Delta$. 
\end{claimproof}

\begin{claimproof}[Proof of Claim~\ref{clm:3}]
Suppose there are two different nontrivial components, $C_i$ and $C_j$. We will first show that we may then assume that either $A_i \subseteq A_j$ or $A_j \subseteq A_i$. Indeed,
if either $\overline{A_j} \subseteq A_i$ or $A_i \subseteq \overline{A_j}$, then after interchanging $X_j$ and $Y_j$ (and thus interchanging $A_j$ and $\overline{A_j}$), we obtain $A_j \subseteq A_i$ or $A_i \subseteq A_j$. So we may assume for a contradiction that none of $A_i\subseteq A_j, A_i \subseteq \overline{A_j}, A_j \subseteq A_i, \overline{A_j} \subseteq A_i$ holds. But then there exist $a \in A_i \cap \overline{A_j}, b \in A_i \cap A_j, c \in \overline{A_i} \cap A_j$ and $d \in \overline{A_i} \cap \overline{A_j}$. Furthermore, because each component contains at least one blue edge, there are blue edges $(x_i, y_i) \in X_i \times Y_i$ and $(x_j,y_j) \in X_j \times Y_j$ that have to be connected by an edge in order to have them within distance $2$. If $x_ix_j$ is an edge, then $x_ix_jb$ forms a triangle. Similarly, if $x_iy_j$, $y_iy_j$ or $x_jy_i$ is an edge then $x_iy_ja$, $y_iy_jd$ or $x_jy_ic $ is a triangle, respectively. Contradiction.

It follows that we can reorder the components by inclusion, so that $A_{k+1} \subseteq A_{k+2} \subseteq \cdots$. Now we are ready to show that $G_{\bip}$ is bipartite on parts $X= \bigcup_{i\ge k+1} X_i$ and $Y= \bigcup_{i\ge k+1} Y_i$. Suppose $X$ is not a stable set. Then there are $x_1,x_2 \in X$ that form an edge, where $x_1 \in X_i$ and $x_2 \in X_j$ for some $i\le j$. Since $\emptyset \ne A_i \subseteq A_j$, there must be a triangle in $x_1x_2A_i$. Contradiction. Similarly, suppose $Y$ is not a stable set. Then there are $y_1,y_2 \in Y$ that form an edge, where $y_1 \in Y_i$ and $y_2 \in Y_j$ for some $i\le j$. Since $\emptyset \ne\overline{A_j} \subseteq \overline{A_i}$, there must be a triangle in $y_1y_2\overline{A_j}$. Contradiction.
\end{claimproof}
This completes the proof of Theorem~\ref{thm:main,odd}\ref{itm:triangle}.
\end{proof}

\begin{proof}[Proof of Lemma~\ref{lem:C5}]
Let $G$ be a $C_5$-free multigraph and $H$ be a submultigraph of $G$ whose edges form a maximum clique in $L(G)^2$.  Let $v \in V(G)$ be a vertex satisfying $\deg_H(v)=\DeltaH$.

We may assume that $|N_H(v)| \ge 2$. Indeed if $|N_H(v)|=1$ then, writing $N_H(v)=\{u\}$, the multiplicity of $uv$ in $H$ is equal to $\DeltaH$. Each vertex in $N_G(\{u,v\})$ is incident to at most $\DeltaH$ edges of $H$, and there are at most $\deg_G(u)+\deg_G(v)-2\DeltaH\le\sigma_G(H)-2\DeltaH$ such vertices. Therefore  $e(H) \le \DeltaH + (\sigma_G(H)-2\DeltaH) \DeltaH\le \DeltaH(\sigma_G(H)- \DeltaH)$, as desired.

Now let $E^* \subseteq E(H)$ denote the set of those edges $st \in E(H)$ for which $s, t \notin N_G(v)$.  Let $st \in E^*$. Then, for all $u \in N_H(v)$, $vu$ must be within distance $2$ of $st$, so either $us \in E(G)$ or $ut \in E(G)$. Without loss of generality, $us \in E(G)$. Because $G$ has no $C_5$ and $|N_H(v)| \ge 2$, it follows that $t$ is anticomplete to $N_H(v)\setminus\{u\}$, so in fact $s$ is complete to $N_H(v)$ and $t$ is anticomplete to $N_H(v)$. We derived this for all $st \in E^*$, so there exists a subset $S \in V(H)$ such that 
\begin{enumerate}
\item \label{bla} $E^* \subseteq I_H(S)$, and
\item \label{blabla} $S$ is complete to $N_H(v)$. 
\end{enumerate}

Since each edge of $H$ is either in $E^*$ (and thus has an endpoint in $S$) or has an endpoint in $N_G(v)$, we can cover $E(H)$ with the following subsets:
\begin{align*}
E_S&=I_H(S), \,\,\,\,\,
E_1=I_H(N_G(v)\setminus N_H(v)), \text{ and }\,
E_2=I_H(N_H(v))\setminus E_S.
\end{align*}
 Each vertex is incident to at most $\DeltaH$ edges of $H$, so $|E_S|\le  \DeltaH |S|$. Furthermore, $|E_1|\le \DeltaH |N_G(v)\setminus N_H(v)|  \le \DeltaH (\deg_G(v) - \DeltaH)$ by our choice of $v$. By property~\ref{blabla}, each vertex  $x \in N_H(v)$ is incident to at most $\deg_G(x)-|S|$ edges that are not incident to $S$. Thus, $|E_2|\le \sum_{x \in N_H(v)} (\deg_G(x) - |S|) \le \DeltaH (\sigma_G(H) - \deg_G(v)- |S|)$. In conclusion,

\[
e(H) \le |E_S|+|E_1|+|E_2| \le \DeltaH (\sigma_G(H)-\DeltaH).
\]
The last expression is largest if $\DeltaH= \frac12\sigma_G(H)$, so $e(H)\le \frac14\sigma_G(H)^2$.
\end{proof}

\section{No $(2k+1)$-cycles or no $2k$-cycles}\label{sec:odd}\label{sec:even}

In this section, we prove Theorem~\ref{thm:main,odd}\ref{itm:odd} and Theorem~\ref{thm:main,even}\ref{itm:even}. The methods are quite different from those of the previous section. 
In both proofs, we utilise a Tur\'an-type lemma for graphs with no path $P_{2k+1}$ of order $2k+1$.

Given a graph $G$ and a subset $S\subseteq V(G)$, an edge $uv$ out of $S$, with $u\in S$ and $v\notin S$, say, is called {\em $b$-branching out of $S$} if $|N_G(v) \cap S| \ge b$.

\begin{lemma}\label{ksat}
Fix $k\ge 2$.
Given a graph $G$ and a subset $X\subseteq V(G)$, if $G[X\times (V(G)\setminus X)]$ is $P_{2k+1}$-free, then
at most $k^2 |X|$ edges are $k$-branching out of $X$.
\end{lemma}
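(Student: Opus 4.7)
The plan is to prove the contrapositive: assuming more than $k^2|X|$ edges are $k$-branching out of $X$, I aim to exhibit a $P_{2k+1}$ inside $B := G[X \times (V(G)\setminus X)]$.

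First I would set up notation. Let $Y_k := \{v \in V(G)\setminus X : |N_B(v)|\ge k\}$; the $k$-branching edges are exactly the edges of $B' := B[X\cup Y_k]$, and the hypothesis becomes $e(B') > k^2|X|$. I would reduce the task to the stronger local statement: for every $u\in X$, the degree of $u$ in $B'$ is at most $k^2$. Summing over $u\in X$ then yields $e(B') \le k^2|X|$, a contradiction. (Equivalently, by averaging, some $u$ already has at least $k^2+1$ neighbors in $Y_k$, and we rule this out.)

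For the local statement, suppose some $u\in X$ has neighbors $v_0,\ldots, v_{k^2}$ in $Y_k$; each such $v_i$ has at least $k-1$ further neighbors in $X\setminus\{u\}$. The goal is to produce a path
\[
u,\; v_{i_0},\; x_1,\; v_{i_1},\; x_2,\; \ldots,\; v_{i_{k-1}},\; x_k,
\]
on $2k+1$ distinct vertices of $B$, with the $v_{i_j}$'s chosen from the reserved set $\{v_0,\ldots, v_{k^2}\}$ and the $x_j$'s from $X\setminus\{u\}$, thereby exhibiting a forbidden $P_{2k+1}$. I would build the path by greedy extension: at every step at most $k$ reserved $v$'s have been used, so at least $k^2+1-k$ remain, and each reserved $v$ has enough unused $X$-neighbors to attempt further extension.

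The main obstacle is handling the case in which the greedy extension stalls, i.e.\ all reserved-$v$-neighbors of the current end $x_j$ already lie on the path. To overcome this, I would apply a P\'osa-type rotation: replace the most recent segment of the path by a route through an unused reserved $v$, exploiting the slack $k^2+1 \gg k$ to guarantee a valid swap. If even rotations are blocked, then $k$ specific $X$-vertices (including $u$) are jointly adjacent to at least $k+1$ reserved $v$'s, which exhibits a $K_{k,k+1}\subseteq B$; since $K_{k,k+1}$ itself contains $P_{2k+1}$ (take $k+1$ vertices from one part and $k$ from the other, alternating), this gives the contradiction. The trickiest technical point will be the bookkeeping of the rotation argument and verifying that the terminating complete-bipartite configuration must be reached whenever both greedy extension and rotation fail.
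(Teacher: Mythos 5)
Your reduction to the ``local statement'' --- that if $B = G[X\times(V(G)\setminus X)]$ is $P_{2k+1}$-free, then every $u\in X$ has at most $k^2$ neighbours in $Y_k$ --- is false for $k\ge 3$, and that is the load-bearing step of your plan. Consider the following $B$: let $X=\{u\}\cup\{x_{i,j}:1\le i\le k^2+1,\ 1\le j\le k-1\}$ with all $x_{i,j}$ distinct, let $V(G)\setminus X \supseteq \{v_1,\ldots,v_{k^2+1}\}$, and let $v_i$ be adjacent exactly to $u,x_{i,1},\ldots,x_{i,k-1}$. Each $v_i$ has precisely $k$ neighbours in $X$, so every edge of $B$ is $k$-branching and $u$ is incident to $k^2+1$ of them, contradicting your claim. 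Yet $B$ contains no $P_{2k+1}$ when $k\ge 3$: each $x_{i,j}$ has degree one, so any path must go $x_{i,j}\to v_i$, then at best $\to u\to v_{i'}\to x_{i',j'}$, for a maximum order of $5<2k+1$. Moreover, the $v_i$'s share no common $X$-neighbour other than $u$, so no nontrivial rotation is available and no $K_{k,k+1}$ appears --- the fallback you outline has nothing to act on. (The lemma itself is not threatened by this example: the branching-edge count $(k^2+1)k$ is well below $k^2|X|=k^2\bigl(1+(k^2+1)(k-1)\bigr)$.)

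The underlying difficulty is that a single high-degree vertex $u$ tells you nothing about whether the alternating walk can be continued once it leaves $u$: the path $u,v_{i_0},x_1$ already stalls if $x_1$'s only $Y_k$-neighbour is $v_{i_0}$, which is exactly what happens above. The paper avoids this by inducting on $|X|$. If some $x\in X$ is incident to at most $k$ branching edges, delete $x$, observe that at most $k$ ``pivotal'' vertices (those adjacent to $x$ with exactly $k-1$ other $X$-neighbours) have their branching status affected, contributing at most $k^2$ extra edges, and apply the inductive hypothesis. Otherwise \emph{every} $x\in X$ is incident to at least $k+1$ branching edges, and it is that \emph{uniform} lower bound --- not a concentration of degree at one vertex --- which lets a greedy alternating path escape $X$ at every step while avoiding previously used vertices. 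If you want a direct, non-inductive argument, you must impose a degree hypothesis on every vertex your path might reach, not on a single one.
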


\begin{proof}
Let $G$ and $X\subseteq V(G)$ satisfy the hypothesis.
Without loss of generality, we may assume that $G$ is connected.
We prove the result by induction on $|X|$.
The statement is trivially true if $|X| < k$.
For the induction, let us assume that $|X| \ge k(\ge 2)$ and that the statement is true for any $X'\subseteq V(G)$ with $|X'|<|X|$.

Suppose there is some $x\in X$ that is incident to at most $k$ edges that are $k$-branching. We then have by induction that there are at most $k^2(|X|-1)$ edges that are $k$-branching out of $X\setminus\{x\}$. 
Let us call a vertex $v \notin X$ \textit{pivotal} if there exists an edge incident to $v$ which is $k$-branching out of $X$ but not $k$-branching out of $X\setminus\{x\}$. Each pivotal vertex must be adjacent to $x$ and must have exactly $k-1$ neighbours in $X\setminus\{x\}$. It follows that there are at most $k$ pivotal vertices, each of which is incident to exactly $k$ edges that are $k$-branching out of $X$ (but not $k$-branching out of $X\setminus\{x\}$). In conclusion, at most $k^2(|X|-1) + k^2=k^2|X|$ edges are $k$-branching out of $X$.

There remains the possibility that every vertex of $X$ is incident to  at least $k+1$ edges that are $k$-branching.
In this case, however, we can construct a path through the following iterative process. Consider an arbitrary $k$-branching edge out of $X$ and let $x'_0$ be its endpoint in $V(G)\backslash X$. Suppose we have constructed a path $x'_0x_0\cdots x'_{i-1}x_{i-1}x'_i$ for some $i\ge 0$, such that $x'_i$ is incident to an edge which is $k$-branching out of $X$. Note that $x'_i$ has at least $k$ neighbours in $X$ by definition. If $i < k$, we may choose $x_{i}\in X\setminus\{x_0,\dots,x_{i-1}\}$. Subsequently, we may choose a $k$-branching edge $x_ix'_{i+1}$ such that $x'_{i+1} \in V(G)\setminus (X\cup\{x'_0,\dots,x'_{i}\})$. This process certifies that $x'_0x_0\cdots x'_k$ is in $G[X\times (V(G)\setminus X)]$. This path is of order $2k+1$, contrary to our assumption.
\end{proof}

In addition to Lemma~\ref{ksat}, we need a basic bound on the Tur\'an number of a path $P_{\ell+1}$ of order $\ell +1$ due to Erd\H{o}s and Gallai~\cite{ErGa59}.

\begin{lemma}[\cite{ErGa59}]\label{lem:ErGa}
For a $P_{\ell+1}$-free graph $G$, $e(G)\le (\ell-1)|G|/2$.
\end{lemma}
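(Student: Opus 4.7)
The plan is to prove this classical Erd\H{o}s--Gallai path bound by induction on $|G|$. First I would reduce to the case of a connected graph: if $G$ is disconnected, each component is still $P_{\ell+1}$-free and both sides of the inequality add over components, so the general statement follows from the connected case. If additionally $|G|\le\ell$, the trivial estimate $e(G)\le \binom{|G|}{2}\le (\ell-1)|G|/2$ already delivers the bound, so I may assume $G$ is connected with $|G|\ge \ell+1$.

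In that main case, the strategy is to exhibit a vertex $v$ with $\deg_G(v)\le (\ell-1)/2$. Granting such a $v$, the subgraph $G-v$ inherits $P_{\ell+1}$-freeness, so induction gives $e(G-v)\le (\ell-1)(|G|-1)/2$ and hence
\[
e(G)\;=\;e(G-v)+\deg_G(v)\;\le\;\frac{(\ell-1)(|G|-1)}{2}+\frac{\ell-1}{2}\;=\;\frac{(\ell-1)|G|}{2},
\]
closing the induction.

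So the crux is the existence of a low-degree vertex in a connected $P_{\ell+1}$-free graph on at least $\ell+1$ vertices, for which I would invoke the classical Dirac-type fact that any connected graph with minimum degree $\delta$ contains a path on at least $\min(2\delta+1,\,|G|)$ vertices. If every vertex of $G$ had degree at least $\lceil\ell/2\rceil$, this fact would produce a path on $\min(\ell+1,\,|G|)=\ell+1$ vertices, contradicting the hypothesis; hence some vertex has degree at most $\lceil\ell/2\rceil-1\le(\ell-1)/2$, as required. The one step that needs genuine work is the Dirac-type fact itself, which is routine via P\'osa rotation: starting from a longest path $v_1\cdots v_p$, maximality forces every neighbour of $v_1$ onto the path, and rotating along each edge $v_1v_{i+1}$ relocates the endpoint to $v_i$; iterating produces enough distinct endpoints to conclude $p\ge 2\delta+1$ unless the path already exhausts $G$. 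This rotation argument is the main (and only) obstacle, but it is entirely standard and independent of the rest of the reduction.
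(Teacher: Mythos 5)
The paper does not prove this lemma: it is stated with a citation to Erd\H{o}s and Gallai \cite{ErGa59} and used as a black box, so there is no in-paper argument to compare against. Your proof is correct and is essentially the classical textbook argument: reduce to a connected $G$, dispose of $|G|\le\ell$ trivially, and otherwise delete a vertex of degree at most $(\ell-1)/2$ and induct, with the low-degree vertex supplied by the longest-path lemma for connected graphs of given minimum degree. Two small remarks. First, that longest-path lemma (a connected graph with minimum degree $\delta$ has a path on at least $\min\{2\delta+1,|G|\}$ vertices) is itself one of the results in \cite{ErGa59}, so you are not really sidestepping the original paper, though the lemma does admit a short self-contained proof. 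Second, the cleanest such proof is a single cycle-closing step rather than iterated P\'osa rotation: for a longest path $v_1\cdots v_p$ with $p<|G|$, both endpoint neighbourhoods lie on the path, and the index sets $\{i : v_1v_{i+1}\in E\}$ and $\{i : v_pv_i\in E\}$ are disjoint subsets of $\{1,\dots,p-1\}$ (otherwise one closes a cycle on $V(P)$ and extends it via connectivity), giving $p-1\ge 2\delta$ immediately. With that understood, your reduction is airtight, including the integrality bookkeeping $\lceil\ell/2\rceil-1\le(\ell-1)/2$ used in the inductive step.
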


\begin{proof}[Proof of Theorem~\ref{thm:main,odd}\ref{itm:odd}]
Let $G$ be a $C_{2k+1}$-free graph with $\DeltaG=\Delta\ge 16k$.
Let $H$ be a subgraph of $G$ whose edges form a maximum clique in $L(G)^2$.
Thus bounding $\omega(L(G)^2)$ is equivalent to bounding $e(H)$.
Let $v \in V(G)$ be a vertex satisfying $\deg_H(v)=\DeltaH$. 
For short, we write $A_v = N_G(v)$ and $B_v=N_G(N_G[v])$.
Note that if $E(H[B_v])=\emptyset$,
then every edge in $H$ is incident to $A_v$, and $e(H) \le \deg_G(v)\Delta \le \Delta^2$. 

For any edge $xy\in E(H[B_v])$, it must hold that $N_H(v) \subseteq N_G(\{x,y\})$. If, for every such edge, it holds that $N_G(x)\cap N_H(v) = \emptyset$, say, (and so $N_H(v) \subseteq N_G(y)$,) then $e(H) \le \deg_G(v)\DeltaH + \Delta(\Delta-\DeltaH)\le \Delta^2$.

Fix $xy\in E(H[B_v])$ such that $N_G(x)\cap N_H(v) \ne \emptyset$ and $N_G(y)\cap N_H(v) \ne \emptyset$.
Let $\left\{X,Y\right\}$ be a non-trivial partition of $N_H(v)$ such that $X \subseteq N_G(x)$ and $Y\subseteq N_G(y)$.
We will need the following claim.

\begin{claim}\label{clm:odd}
\begin{enumerate}
\item\label{itm:odd,2k-1} $G[A_v]$ is $P_{2k}$-free. 
\item\label{itm:odd,2k-2} $G[X\times (B_v\setminus\{x,y\})]$ and $G[Y\times (B_v\setminus\{x,y\})]$ are $P_{2k-1}$-free.
\end{enumerate}
\end{claim}

\begin{proof}
\begin{enumerate}[wide, labelwidth=!, labelindent=0pt]
\item If $G[A_v]$ contains a path $P$ of order $2k$, then the concatenation of $P$ with $v$ is a cycle of order $2k+1$ in $G$, a contradiction.
\item If $G[X\times (B_v\setminus\{x,y\})]$ (say) contains a path of order $2k-1$, then there must be a subpath $P$ of order $2k-3$ such that both its endpoints are in $X$. Now for any $b\in Y$, the concatenation of $P$ with the path $vbyx$ is a cycle of length $2k+1$ in $G$, a contradiction.
\qedhere
\end{enumerate}
\end{proof}

Letting $C$ denote the set of $(2k-3)$-branching edges out of $N_H(v)$ with respect to the graph $G[N_H(v)\times (B_v\setminus\{x,y\})]$, note that the edges of $H$ can be covered by the following six sets:
\begin{align*}
&I_H(\{v,x,y\}), \,\,\,\,\,I_H(N_G(v)\setminus N_H(v)), \,\,\,\,\,E(H[A_v]), \\
&C, \,\,\,\,\,E_H(N_H(v),B_v\setminus\{x,y\})\setminus C, \,\,\,\,\,E(H[B_v\setminus\{x,y\}]).
\end{align*}

We have $|I_H(\{v,x,y\})| \le 3\DeltaH-1$ and $|I_H(N_G(v)\setminus N_H(v))| \le (\Delta-\DeltaH)\DeltaH$.
Also, $e(H[A_v]) \le (k-1)\Delta$ by Lemma~\ref{lem:ErGa} and Claim~\ref{clm:odd}\ref{itm:odd,2k-1}.

Let $C_x$ (resp.~$C_y$) be the set of $(k-1)$-branching edges out of $X$ (resp.~$Y$) with respect to the graph $G[X\times (B_v\setminus\{x,y\})]$ (resp.~$G[Y\times (B_v\setminus\{x,y\})]$). 
By Lemma~\ref{ksat} and Claim~\ref{clm:odd}\ref{itm:odd,2k-2}, $|C_x| \le (k-1)^2|X|$ and  $|C_y| \le (k-1)^2|Y|$.
By the pigeonhole principle, for every set of edges of $C$ incident to a single vertex in $B_v\setminus\{x,y\}$, more than half of them belong to $C_x$ or to $C_y$.  Thus $|C| \le 2((k-1)^2|X|+(k-1)^2|Y|)=2(k-1)^2\DeltaH$. 
We next bound $|E_H(N_H(v),B_v\setminus\{x,y\})\setminus C|$.

Supposing that there are edges in $E_H(N_H(v), B_v\setminus\{x,y\})\setminus C$, let $A'$ be the set of vertices in $N_H(v)$ that are incident to such an edge and let $u\in A'$ be a vertex of minimum degree in $H[A']$.
Note that $\deg_{H[A']}(u) \le 2(k-1)$ by Lemma~\ref{lem:ErGa} and Claim~\ref{clm:odd}\ref{itm:odd,2k-1}.
Let $uw\in E_H(N_H(v), B_v\setminus\{x,y\})$ be an edge that is not $(2k-3)$-branching out of $N_H(v)$. 
By crudely bounding the number of edges that are within distance $2$ of $uw$ and not $(2k-3)$-branching, we have that
\begin{align*}
&|E_H(N_H(v), B_v\setminus\{x,y\})\setminus C|\\
&\le |N_G(u)\cap (B_v\setminus\{x,y\})|\cdot(2k-4)
+|N_G(w)\cap (B_v\setminus\{x,y\})|\cdot (2k-4)\\
&\,\,+|N_G(u)\cap A'|\cdot \Delta
+(|N_G(w)\cap A'|-1)\cdot \Delta\\
&\le \Delta (2k-4)
+\Delta (2k-4) +2(k-1)\Delta
+(2k-5)\Delta\\
&=(8k-15)\Delta.
\end{align*}

It only remains to bound the number of edges of $H[B_v\setminus\{x,y\}]$. First observe that we may assume $\deg_H(v)=\DeltaH\ge 4k-6$, for otherwise $e(H) \le |N_H[\{u,w\}]|\DeltaH < 2(4k-6)\Delta$ (which is at most $\Delta^2$ if $\Delta\ge 16k$).  
Let $x'y'$ be an edge of $H[B_v\setminus\{x,y\}]$.
Write $d_{x'}$ for $|N_G(x')\cap N_H(v)|$ and $d_{y'}$ for $|N_G(y')\cap N_H(v)|$. As we have already observed earlier, $N_G(\{x',y'\})\supseteq N_H(v)$, and so $d_{x'}+d_{y'} \ge \DeltaH$.
If both $d_{x'} \ge 2k-3$ and $d_{y'} \ge 2k-3$, then the number of edges of $C$ incident to $\{x',y'\}$ is at least $d_{x'}+d_{y'} \ge \DeltaH$, and so one of $x'$ or $y'$, say $y'$, is incident to at least $\DeltaH/2$ edges of $C$.
Otherwise, $d_{x'} \le 2k-4$ (say) and so $d_{y'}\ge \DeltaH-2k+4 \ge \DeltaH/2$, in which case $y'$ is incident to at least $\DeltaH/2$ edges of $C$.
In either case, the number of edges of $H[B_v\setminus\{x,y\}]$ incident to $y'$ is at most $\Delta-\DeltaH/2$.
Since $x'y'$ was arbitrary, what we have shown is that $H[B_v\setminus\{x,y\}]$ admits a vertex cover each member of which is incident to at least $\DeltaH/2$ edges of $C$ and to at most $\Delta-\DeltaH/2$ edges of $H[B_v\setminus\{x,y\}]$. The size of this vertex cover is at most $|C|$ divided by $\DeltaH /2$. It then follows, using our earlier derived bound on $|C|$, that
\begin{align*}
e(H[B_v\setminus\{x,y\}])
& \le \frac{2(k-1)\DeltaH}{\DeltaH/2} (\Delta-\DeltaH/2) \\
& = 2(k-1)(2\Delta-\DeltaH).
\end{align*}

Summing all of the above estimates, we deduce that the number of edges in $H$ is at most
\begin{align*}
&3\DeltaH-1
+(\Delta-\DeltaH)\DeltaH
+(k-1)\Delta\\
&+2(k-1)^2\DeltaH
+(8k-15)\Delta
+2(k-1)(2\Delta-\DeltaH)\\
&\le \frac14\Delta^2+ (2k^2+7k-13)\Delta,
\end{align*}
which is at most $\Delta^2$ if $\Delta \ge 3k^2 +10k$.
\end{proof}

\begin{proof}[Proof of Theorem~\ref{thm:main,even}\ref{itm:even}]
Let $G$ be a $C_{2k}$-free graph with $\DeltaG=\Delta$.
Let $H$ be a subgraph of $G$ whose edges form a maximum clique in $L(G)^2$.
Choose an edge $uv\in E(H)$, and define
$A_u= N_G(u) \setminus \{v\}$ and
$A_v= N_G(v) \setminus N_G[u]$. For short we will write $A= A_u \cup A_v$ for the neighbourhood of $\{u,v\}$ and we also need the second-order neighbourhood $B= N_G(A)\setminus (A \cup \{u,v\})$.
Note that $|A| \le 2\Delta-2$.

Letting $C$ denote the set of $(2k-1)$-branching edges out of $A\cup \{u,v\}$, note that the edges of $H$ can be covered by the following four sets:
\[
I_H(\{u,v\}), \,\,\,\,\,E(H[A]), \,\,\,\,\,C, \,\,\,\,\,E_H(A,B)\setminus C.
\]

\begin{claim}\label{clm:even}
\begin{enumerate}
\item\label{itm:even,2k-2}
 $G[A]$ is $P_{2k-1}$-free.
\item\label{itm:even,2k}
 $G[A_u\times B]$ and $G[A_v\times B]$ are $P_{2k+1}$-free.
\end{enumerate}
\end{claim}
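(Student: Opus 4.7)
For part (ii), my plan is to exploit the bipartite structure. Suppose $G[A_u\times B]$ contains a path $q_1 q_2\cdots q_{2k+1}$. Since a path of odd order in a bipartite graph has both endpoints in the same part, either $q_1,q_{2k+1}\in A_u$ (in which case the subpath $q_1\cdots q_{2k-1}$ has both endpoints in $A_u$) or $q_1,q_{2k+1}\in B$ (in which case the subpath $q_2\cdots q_{2k}$ has both endpoints in $A_u$). In each case, closing the $(2k-1)$-vertex subpath through $u$ produces a cycle of order $2k$, which contradicts $G$ being $C_{2k}$-free. The $G[A_v\times B]$ case is identical with $v$ replacing $u$.

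For part (i), I would proceed by contradiction: suppose $G[A]$ contains a path $P=p_1p_2\cdots p_{2k-1}$, and note that each $p_i$ lies in exactly one of $A_u$, $A_v$ (which are disjoint). First, if both endpoints of $P$ lie in $N_G(u)$ then closing through $u$ yields a $C_{2k}$; the same with $v$. So we may assume by reorientation that $p_1\in A_u\setminus N_G(v)$ and $p_{2k-1}\in A_v$. Next, closing the subpath $p_2\cdots p_{2k-1}$ via $uv$ (i.e.~as the cycle $u,p_2,\ldots,p_{2k-1},v,u$) produces a $C_{2k}$ unless $p_2\not\sim u$, forcing $p_2\in A_v$; symmetrically, $p_{2k-2}\in A_u\setminus N_G(v)$.

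The core of the argument is then an iteration along the even-indexed vertices of $P$. For each $b\in\{2,4,\ldots,2k-4\}$, I consider the $2k$-vertex cycle
\[
u,\,p_1,\,p_2,\,\ldots,\,p_b,\,v,\,p_{2k-1},\,p_{2k-2},\,\ldots,\,p_{b+2},\,u,
\]
which is realised in $G$ precisely when $p_b\sim v$ and $p_{b+2}\sim u$. Since $p_2\in A_v$ gives $p_2\sim v$, the case $b=2$ forces $p_4\not\sim u$, hence $p_4\in A_v$; inductively, $p_{2i}\in A_v$ for all $1\le i\le k-1$. Taking $i=k-1$ gives $p_{2k-2}\in A_v$, contradicting $p_{2k-2}\in A_u$ from the previous paragraph.

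The step I expect to be the main obstacle is identifying this family of ``alternating'' cycles of order $2k$: it is not apparent from merely closing through $u$ or $v$ alone, which only produces a $C_{2k+1}$ in the mixed-endpoint case. The key is to use both $u$ and $v$, traversing the edge $uv$ once and winding back through the tail of $P$ in reversed order, so that only two ``witness'' vertices $p_b$ and $p_{b+2}$ need the correct adjacency. Once this family is in hand, the iteration along even indices propagates quickly and forces the contradiction.
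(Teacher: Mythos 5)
Your proof is correct and uses essentially the same construction as the paper: part (ii) is the identical bipartite-parity argument, and for part (i) you use the same family of $2k$-cycles that traverse the edge $uv$ and wind back along the tail of the path. The only (cosmetic) difference is that you propagate $A_v$-membership along the even-indexed vertices by induction, whereas the paper notes the endpoint conditions force a first ``transition'' index and builds a single such cycle there.
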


\begin{claimproof}
\begin{enumerate}[wide, labelwidth=!, labelindent=0pt]
\item Suppose that there is a path $x_0\cdots x_{2k-2}$ of order $2k-1$ in $G[A_u\cup A_v]$.
Without loss of generality assume that $x_0\in A_u$. Then $x_{2k-3}\notin A_v$ or else $ux_0\cdots x_{2k-3}v$ would be a cycle of length $2k$ in $G$, a contradiction. So $x_{2k-3}\in A_u$. Also $x_{2k-2}\in A_v$ or else $ux_0\cdots x_{2k-2}$ would be a cycle of length $2k$. Then $x_1\in A_v$ or else $ux_1\cdots x_{2k-2}v$ would be a cycle of length $2k$.
Now take the least $i$ such that $x_{2i-1}\in A_v$ and $x_{2i+1}\in A_u$. Since $x_1\in A_v$ and $x_{2k-3}\in A_u$, such an $i \in [2k-2]$ exists. But this implies that $ux_0\cdots x_{2i-1}vx_{2k-2}\cdots x_{2i+1}$ is a cycle of length $2k$, a contradiction.
\item In any path of order $2k+1$ in $G[A_u\times B]$, say, there is a path of order $2k-1$ with both of its endpoints in $A_u$. This path together with $u$ forms a $C_{2k}$ in $G$, a contradiction.
\qedhere
\end{enumerate}
\end{claimproof}

Note that $|I_H(\{u,v\})| \le 2\Delta-1$.
By Lemma~\ref{lem:ErGa} and Claim~\ref{clm:even}\ref{itm:even,2k-2}, $e(H[A]) \le (k-1)|A|$. 
Let $C_u$ (resp.~$C_v$) be the set of $k$-branching edges out of $A_u\cup \{u\}$ (resp.~$A_v\cup \{v\}$) with respect to the graph $G[A_u \cup V(G)\setminus (A_v  \cup \{v\})]$ (resp.~$G[A_v \cup V(G)\setminus (A_u  \cup \{u\})]$). 
By Lemma~\ref{ksat} and Claim~\ref{clm:even}\ref{itm:even,2k}, $|C_u| \le k^2|A_u|$ and  $|C_v| \le k^2|A_v|$.
By the pigeonhole principle, for every set of edges of $C$ incident to a single vertex in $B$, more than half of them belong to $C_u$ or to $C_v$.  Thus $|C| \le 2(k^2|A_u|+k^2|A_v|)=2k^2|A|$. 
It remains to bound $|E_H(A,B)\setminus C|$.

Assuming that $E_H(A,B)\setminus C\ne \emptyset$,
let $A'\subseteq A$ be the set of those vertices in $A$ that are incident to some edge of $E_H(A,B)\setminus C$ and let $x\in A'$ be a vertex of minimum degree in $H[A']$. Note that $\deg_{H[A']}(x) \le 2k-3$ by Lemma~\ref{lem:ErGa} and Claim~\ref{clm:even}\ref{itm:even,2k-2}. 
Let $xy\in E(H)$ be an edge that is not $(2k-1)$-branching out of $A\cup\{u,v\}$. 
By crudely bounding the number of edges that are within distance $2$ of $xy$ and not $(2k-1)$-branching, we have that
\begin{align*}
|E_H(A,B)\setminus C|
\le \,& |N_G(x)\cap B| \cdot (2k-2)
+|N_G(y)\cap B|\cdot(2k-2)\\
&+|N_G(x)\cap A'|\cdot \Delta
+(|N_G(y)\cap A'|-1)\cdot \Delta\\
\le \,& \Delta(2k-2)
+\Delta(2k-2)
+(2k-3)\Delta
+(2k-3)\Delta\\
=\,& (8k-10)\Delta.
\end{align*} 

Combined with the previous estimates and using that $\Delta \ge 3$ (otherwise $G$ is a collection of vertex-disjoint trees and cycles, so the theorem follows straightforwardly), we obtain
\begin{align*}
e(H)
&\le 2\Delta-1+(2k^2+k-1)|A|+(8k-10)\Delta\\
&\le (4k^2+10k-10)\Delta +1-2k-2k^2\le (5k^2+14k)(\Delta-1).\qedhere
\end{align*}
\end{proof}

\section{No three consecutive cycle lengths}\label{sec:3cycles}

In this section, we prove a stronger version of Theorem~\ref{thm:main,even}\ref{itm:3cycles,even}.

\begin{theorem}\label{thm:3cycles}
Let $G$ be a graph with $\DeltaG=\Delta$.
\begin{enumerate}
\item\label{itm:path}
$\omega(L(G)^2)\le (\kappa-2) (\Delta-1) + 2$
if $G$ is $P_{\kappa+1}$-free, $\kappa\ge 3$.
\item\label{itm:3cycles}
$\omega(L(G)^2)\le (\ell-2) (\Delta-1) + 2$
if $G$ is $\{C_{\ell-1},C_{\ell},C_{\ell+1}\}$-free, $\ell\ge5$.
\end{enumerate}
\end{theorem}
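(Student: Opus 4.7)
My plan is to handle part (ii) by a reduction to part (i), and to prove part (i) by analysing a longest path in the subgraph realising a maximum strong clique.

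\textbf{Reduction from (ii) to (i).} Let $F$ be a maximum strong clique in a $\{C_{\ell-1},C_\ell,C_{\ell+1}\}$-free graph $G$ and set $H=(V(F),F)$. I will first show that $H$ is $P_{\ell+1}$-free as a subgraph. Assume, toward contradiction, that $P=v_0v_1\cdots v_\ell$ is a path of $H$. Since $\ell\ge5$, the terminal edges $v_0v_1$ and $v_{\ell-1}v_\ell$ are vertex-disjoint and both lie in $F$, so by the strong-clique property there is an edge $ab\in E(G)$ with $a\in\{v_0,v_1\}$ and $b\in\{v_{\ell-1},v_\ell\}$. Each of the four possible choices of $(a,b)$ closes, together with the appropriate subpath of $P$ from $a$ to $b$, a cycle of length in $\{\ell-1,\ell,\ell+1\}$, contradicting the cycle-freeness hypothesis. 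Hence $H$ is $P_{\ell+1}$-free. Applying part (i) with $\kappa=\ell$ to $H$ (interpreted as a standalone graph whose edge set happens to form the strong clique $F$ in the ambient $G$) then yields $|F|=e(H)\le(\ell-2)(\Delta-1)+2$; for this application to be valid, the proof of (i) must be run in the slightly more general setting in which $H\subseteq G$, $H$ is $P_{\kappa+1}$-free, and $E(H)$ is a strong clique in $L(G)^2$.

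\textbf{Proof of part (i).} Let $H$ be $P_{\kappa+1}$-free and take a longest path $P=v_0v_1\cdots v_p$ in $H$, so $p\le\kappa-1$. The central structural claim is: \emph{every edge of $H$ is incident to $V(P)$}. Suppose some $e=xy\in E(H)$ has $x,y\notin V(P)$. By the strong-clique property applied to $e$ and $v_0v_1$, there is a connector $ab\in E(G)$ with $a\in\{v_0,v_1\}$ and $b\in\{x,y\}$; WLOG $a=v_0$ and $b=x$, giving a $G$-path $y-x-v_0-v_1-\cdots-v_p$ of length $p+2$. In the part (i) setting this already gives a $P_{\kappa+1}$ in $G$ when $p=\kappa-1$, contradicting $P_{\kappa+1}$-freeness of $G$. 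In the part (ii) setting, one combines this connector with a second connector between $e$ and $v_{p-1}v_p$ to close a cycle of length in $\{p+1,p+2,p+3\}$, which for $p=\ell-2$ or $p=\ell-1$ falls in the forbidden set $\{\ell-1,\ell,\ell+1\}$ after careful case analysis on all four endpoint sub-cases of the two connectors.

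Having established the structural claim, I finish by a degree count on $V(P)$. Each edge of $H$ has an endpoint on $V(P)$, so
\[
e(H)\;\le\;\sum_{v\in V(P)}\deg_H(v)-e(H[V(P)]).
\]
Since each $\deg_H(v)\le\Delta$, and since by the maximality of $P$ the endpoints $v_0,v_p$ have all their $H$-neighbours inside $V(P)$ (so contribute at most $p$ each to the sum rather than $\Delta$), one obtains
\[
\sum_{v\in V(P)}\deg_H(v)\;\le\;(p-1)\Delta+2p,
\]
while $e(H[V(P)])\ge p$ because $V(P)$ already carries the $p$ edges of $P$. This yields a crude bound of order $(p-1)\Delta+p$; tightening by accounting for the fact that edges internal to $V(P)$ incident to the endpoints reduce both sums simultaneously leads to $e(H)\le(p-1)(\Delta-1)+2\le(\kappa-2)(\Delta-1)+2$, as required.

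\textbf{Main obstacle.} The delicate step is the structural claim that every edge of $H$ is incident to a longest-path vertex. For part (i) it follows cleanly from a single connector producing a $G$-path of length $p+2$ that exceeds what $P_{\kappa+1}$-freeness of $G$ allows. For part (ii), only cycle-freeness is available, so one has to bring in both the connector at the $v_0v_1$ end \emph{and} the connector at the $v_{p-1}v_p$ end, and verify that in each of the four combined sub-cases the resulting cycle length is forbidden. The boundary case $p=\ell-1$ is the trickiest because one of the four sub-cases produces a $C_{\ell+2}$, which is not forbidden; to handle it one must either iterate the argument over all $\ell-1$ path-edges (pigeonholing on which endpoint of $\{x,y\}$ the connectors land in) or refine the choice of $P$.
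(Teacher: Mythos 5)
Your plan hinges on a structural claim---that every edge of $H$ is incident to $V(P)$---that is simply not true for the path you chose. You take $P$ to be a longest path \emph{in $H$}. This is the wrong object. Consider $\kappa=5$, $\Delta=2$: if $H$ is a perfect matching on three edges $e_1,e_2,e_3$ whose connectors in $G$ form a triangle between the $u_i$-ends, then a longest path in $H$ is a single edge ($p=1$), yet the other two edges of $H$ avoid $V(P)$ entirely. Even when no such degenerate $H$ occurs, your argument only produces the contradiction when $p=\kappa-1$ (and, with the connector landing on $v_0$, a path of $p+3\ge\kappa+1$ vertices); the WLOG ``$a=v_0$'' is unjustified---if $a=v_1$ you only get $p+2$ vertices, which for $p=\kappa-2$ is not yet a $P_{\kappa+1}$---and for $p<\kappa-2$ neither connector endpoint gives any contradiction. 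The correct object, which is what the paper's iterative construction effectively produces, is a longest path in $G$ whose \emph{two terminal edges lie in $H$}; then if $e=xy\in E(H)$ avoids $V(P)$, the connector $ab$ between $e$ and the terminal edge $v_0v_1$ always extends $P$ to a strictly longer $G$-path $yx\cdots v_p$ whose terminal edges $yx$ and $v_{p-1}v_p$ are again in $H$, contradicting maximality no matter whether $a=v_0$ or $a=v_1$. With that choice, your final degree count (which, after the tightening you gesture at, does correctly give $e(H)\le(p-1)(\Delta-1)+2$, using $\deg_H(v_0),\deg_H(v_p)$-cancellation plus the $p$ path edges and the possible $v_0v_p$ edge) would close part (i). So part (i) is repairable but currently has a real hole.

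The reduction of (ii) to (i) has a circularity you have not resolved. You observe, correctly, that $H$ is $P_{\ell+1}$-free, and then ask to apply a ``generalized (i)'' in which only $H$ is assumed $P_{\kappa+1}$-free. But your proof of (i)---both in the version you wrote and in the repaired version above---leans on $P_{\kappa+1}$-freeness of the ambient $G$ to contradict the existence of the extended $G$-path. In the (ii) setting $G$ is not $P_{\ell+1}$-free, only cycle-restricted, so your generalized (i) is never established and the reduction does not go through. You recognize this implicitly and pivot to a direct cycle argument for (ii), but then you yourself flag the genuine obstruction: when $p=\ell-1$ and the two connectors (one from each terminal edge of $P$ to $e=xy$) land on different endpoints of $e$ and on the extreme vertices $v_0,v_{\ell-1}$ of $P$, you close a $C_{\ell+2}$, which is not forbidden. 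The paper's proof deals with precisely this by tracking which edges of the constructed path are ``blue'' ($\in H$) and using the invariant from the iterative construction: the argument identifies the iteration $j$ with $|W_j^*|=\ell$ and $|W_{j+1}^*|=\ell+2$, deduces that $w_\ell w_{\ell+1}$ is red and $w_{\ell+1}w_{\ell+2}$ is blue, pins down the unique possible locations of connectors $e^*$ and $e^{**}$, and then splits on whether $w_{\ell-3}w_{\ell-2}$ is blue or red---in the red case invoking a pendant blue edge $w_{\ell-2}w_p$ left behind by an earlier iteration to build the forbidden $P_{\ell+1}$. None of that structure is available to a bare maximal path, so the boundary case is a real gap in your approach and not merely a detail to be filled in by pigeonholing over path-edges.
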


\begin{proof}[Proof of Theorem~\ref{thm:3cycles}]
If $\Delta=1$ then $\omega(L(G)^2)=1$. If $\Delta =2$, then $G$ is a path or a cycle, or a vertex-disjoint union of such graphs. For all such graphs, $\omega(L(G)^2)\le 5 \le (\ell-2) (\Delta-1)+2$. 
Furthermore, if $G$ is not a $4$-cycle or $5$-cycle then $\omega(L(G)^2)\le 3$, so $\omega(L(G)^2) \le (\kappa-2) (\Delta-1)+2$. Thus we may assume from now on that $\Delta\geq 3$. The idea of the proof is to assume that $\omega(L(G)^2)$ is large and then iteratively construct a path of order $l+1$ (respectively $k+1$) whose extremal edges are in $E(H)$. This will imply the existence of a cycle (respectively path) of forbidden length; contradiction.

 Let $H$ be a subgraph of $G$ whose edges form a maximum clique in $L(G)^2$. Note that $e(H)=\omega(L(G)^2)>\Delta$, for otherwise the conclusion of the theorem is already satisfied. It follows that $G$ contains a path $P_4=x_1y_1x_2y_2$ that starts and ends on edges $x_1y_1, x_2y_2$ from $E(H)$. Indeed, let $e_1$ and $e_2$ be edges of $E(H)$. If they are not incident to each other, then there must be an edge between them and we have obtained the desired $P_4$. So we may assume that all edges of $E(H)$ are pairwise incident and in particular we can write $e_1=xy$ and $e_2=yz$. At most $\Delta$ edges meet in $y$, so $E(H)$ contains an edge $e_3$ that is not incident to $y$ and therefore $e_3$ is incident to $x$. If $e_3=xq \ne xz$ then $qxyz$ forms the desired $P_4$. Otherwise $xyz$ forms a triangle of edges from $E(H)$. Since $e(H)\ge \Delta+1 \ge 4$, there is a fourth edge in $E(H)$ incident to the triangle, again yielding a $P_4$. 

We now define the paths $W_1:=y_1x_2$ and  $W_1^*:=x_1y_1x_2y_2$, the latter being the $P_4$ whose existence we derived above. These paths serve as the initialisation step of a construction (described below). The input of this construction is given by a path $W_i:=y_1x_2y_2\ldots y_{i-1}x_i$ and a `preliminary' path $W_i^*:=x_1W_iy_i$, with the property that the first and final edge of $W_i^*$ are in $E(H)$. The output consists of {\em longer} paths $W_{i+1}$ and $W_{i+1}^*$, with the same properties.
For this construction to work, we need the edge set  $F_i:=E(H) \setminus ( I_H(W_i)\cup \{x_1y_i\} )$ to be nonempty.

As long as $F_i$ is nonempty, we iterate the following case consideration.
\begin{enumerate}[wide, labelwidth=!, labelindent=0pt,noitemsep,font=\bfseries,label=Case \arabic*.,ref=\arabic*]
\item\label{Acase1}
{\em $F_i$ contains an edge which is incident to the first vertex $x_1$ or the last vertex $y_i$ of $W_i^*$.} 

Choose such an edge $e_{i+1} \in F_i$ and assume without loss of generality that it is incident to $y_i$. Then we add $e_{i+1}$ to our preliminary path, and we set $W_{i+1}^*=W_i e_{i+1}$ and $W_{i+1}=W_i^*$. By the definition of $F_i$, $y_i$ is the only vertex in $W_i^*$ that is incident to $e_{i+1}$, so $W_{i+1}^*$ is a path as well.

\item\label{Acase2}
{\em Case~\ref{Acase1} does not apply.}

Then $F_{i}$ contains an edge $x_{i+1}y_{i+1}$ which is not incident to $x_1$ or $y_i$. By the definition of $F_i$, $x_{i+1}y_{i+1}$ is not incident to $x_i$ either.  Therefore there must be an edge $e^*$ between $x_{i+1}y_{i+1}$ and $x_iy_i$. Without loss of generality, $e^*$ is incident to $x_{i+1}$, so we have $x_ix_{i+1} \in E(G)$ or $y_i x_{i+1} \in E(G)$.

\begin{enumerate}[wide, labelwidth=!, labelindent=0pt,noitemsep,font=\bfseries,label=Subcase \arabic{enumi}.\arabic*.,ref=\arabic{enumi}.\arabic*]
\item\label{Acase2.1}
{\em $x_{i}x_{i+1} \in E(G)$.}

Then we set $W_{i+1}^*= W_i x_{i+1} y_{i+1}$ and $W_{i+1}=W_i x_{i+1}$.

\item\label{Acase2.2}
{\em $y_{i}x_{i+1} \in E(G)$ and subcase \ref{Acase2.1} does not apply.}

Then we set $W_{i+1}^*=W_i y_i x_{i+1} y_{i+1}=W_i^*x_{i+1}y_{i+1}$ and $W_{i+1}=W_i y_i x_{i+1}$.
\end{enumerate}
\end{enumerate}

After the final iteration $I$, the set  $F_I$ is empty. 
Since 
\(
F_I= E(H) \setminus ( I_H(W_I)\cup \{x_1y_I\} )
\)
and because the number of edges incident to $W_I$ is at most  $(\Delta-1) |W_I| +1$, it follows that $0=|F_I| \ge e(H) -2 -(\Delta-1) |W_I| $. 

Because $G$ is $P_{\kappa+1}$-free,  our constructed path $W_I^*$ cannot be too large. More precisely, we must have $\kappa \ge |W_I^*| =|W_I|+2$, and therefore 
\[
\omega(L(G)^2) =e(H) \le  (\Delta-1) |W_I| +2 \le (\Delta-1) (\kappa-2) +2.
\]
This concludes the proof of~\ref{itm:path}. For~\ref{itm:3cycles}, we extend the argument slightly.
Suppose for a contradiction that $\omega(L(G)^2)=e(H) \ge (\ell-2) (\Delta-1)+3$.  Then  $W_I^*$ is a path on $|W_I|+2 \ge \tfrac{e(H)-2}{\Delta-1} + 2 \ge \ell + \tfrac{1}{\Delta-1}$ vertices. Note that in the $i$th iteration, the order of the path $W_i^*$ is increased by either $1$ or $2$.\footnote{$|W_{i+1}^*| - |W_i^*|$ is equal to one in case \ref{Acase1} and subcase \ref{Acase2.1}, and equal to two in subcase \ref{Acase2.2}.}   Therefore there exists a $j \le I$ such that $|W_j^*| \in \{\ell,\ell+1\}$.

From now on, let us call the edges of $E(H)$ \textit{blue} and the other edges of $E(G)$  \textit{red}. First we derive that it suffices to show the existence of a $P_{\ell+1}$ that starts and ends on blue edges. Suppose $G$ has a path $A$ of order $\ell+1 \ge 6$ that starts with a blue edge $a_1a_2$ and ends on another blue edge $a_{\ell} a_{\ell+1}$. These (nonincident) blue edges must be within distance $2$, so there must be an edge between them that is not part of $A$. If $a_1a_{\ell+1} \in E(G)$, then $a_1a_2\ldots a_{\ell+1}$ is a $C_{\ell+1}$. Similarly, if $a_1a_{\ell} \in E(G)$ or $a_2a_{\ell+1} \in E(G)$, then there is a $C_{\ell}$. Finally, if $a_2a_{\ell} \in E(G)$, then there is a $C_{\ell-1}$. So $G$ contains a cycle of order $\ell-1$, $\ell$ or $\ell+1$; contradiction. 

So we may assume that $|W_j^*|=\ell$ and $|W_{j+1}^*|=\ell+2$. To finish the proof, we will derive that $G$ then contains another path of order $\ell+1$, starting and ending on blue edges.

Write $W_{j}^*= w_1 \ldots w_{\ell}$. First, since $|W_{j+1}^*|-|W_j^*|=2$, we must have that $W_{j+1}^*= W_{j}^*w_{\ell+1}w_{\ell+2}$, where $w_{\ell}w_{\ell+1}$ is a red edge and $w_{\ell+1}w_{\ell+2}$ is blue. Second, since $w_1w_2$ and $w_{\ell+1}w_{\ell+2}$ are at distance $2$, there is an edge $e^*$ between them. From this observation, we obtain the desired $P_{\ell+1}$ unless $e^*=w_1w_{\ell+2}$.
Third, $w_1w_2$ and $w_{\ell-1}w_{\ell}$ must be at distance $2$ from each other, so they are connected by an edge $e^{**}$ that is not part of $W_j^*$. This yields a forbidden $C_{\ell}$ or $C_{\ell-1}$, unless $e^{**}=w_2w_{\ell-1}$.
Fourth, note that $w_{\ell-2}w_{\ell-1}$ is red, for otherwise $w_{\ell+1}w_{\ell+2}w_1w_2 \ldots w_{\ell-2}w_{\ell-1}$ would yield the desired $P_{\ell+1}$.

 In summary, we have obtained the cycle $\Gamma= w_{1+1} \ldots w_{\ell+2}$, where $w_1w_2$, $w_{\ell-1}w_{\ell}$ and $w_{\ell+1}w_{\ell+2}$ are blue, and $w_{\ell-2}w_{\ell-1}$ is red. Furthermore, it holds that $w_2w_{\ell-1} \in E(G)$.
 
Next, we are going to focus on the edge $e^{***}=w_{\ell-3}w_{\ell-2}$. Since $\ell \ge 5$, this edge is different from the first edge $w_1w_2$. Suppose that $e^{***}$ is blue. 

Then $w_{\ell-2} w_{\ell-3} \ldots w_{2}w_{\ell-1}w_{\ell}w_{\ell+1}w_{\ell+2}$ forms a $P_{\ell+1}$ starting and ending on blue edges. Suppose on the other hand that $e^{***}$ is red.
Because $e^{***}$ and $w_{\ell-2}w_{\ell-1}$ are consecutive red edges of $W_{j+1}^*$, it follows from the construction of the paths $(W_i^*)_{1\le i \le j+1}$ that there must be a pendant blue edge $w_{\ell-2}w_{p}$ that is only incident to $\Gamma$ in the vertex $w_{\ell-2}$. (This pendant edge used to be the blue end-edge of some preliminary path $W_i^*$, $i<j$.) Now $w_pw_{\ell-2}w_{\ell-3} \ldots w_2w_1 w_{\ell+2}w_{\ell+1}$ forms a $P_{\ell+1}$, starting and ending on blue edges.
\end{proof}

\section{No $4$-cycles}\label{sec:C4}

In this section we prove Theorem~\ref{thm:main,even}\ref{itm:C4}. We proceed by a case analysis. In Subsubcases~\ref{case2.1.2} and~\ref{case2.2.1} and Subsubsubcase~\ref{case2.2.2.2} we can reduce to the case of the neighbourhood of a triangle, which constitutes exactly the extremal hairy triangle. In the other situations, we derive bounds that are smaller, at most $2\Delta$ in particular.

\begin{proof}[Proof of Theorem~\ref{thm:main,even}\ref{itm:C4}]
Let $G$ be a $C_4$-free graph with $\DeltaG=\Delta\ge 4$.
Let $H$ be a \textit{blue} subgraph of $G$ whose edges form a maximum clique in $L(G)^2$. Thus bounding $\omega(L(G)^2)$ is equivalent to bounding $e(H)$. 
Choose an edge $uv\in E(H)$, and define
$A_u= N_G(u) \setminus \{v\}$ and
$A_v= N_G(v) \setminus N_G[u]$. For short we will write $A= A_u \cup A_v$ for the neighbourhood of $\{u,v\}$ and we also need the second-order neighbourhood $B= N_G(A)\setminus (A \cup \{u,v\})$.

Note that the edges of $H$ can be partitioned into the following six sets:
\[
E_H(\{u\},A), \,\,\,\,\,E_H(\{v\},A), \,\,\,\,\,E_H(A_u,B), \,\,\,\,\,E_H(A_v,B), \,\,\,\,\,E(H[A]), \,\,\,\,\,\{uv\}.
\]

We will use the following claim a few times.

\begin{claim}
$e(H[A]) \le 1$.
\end{claim}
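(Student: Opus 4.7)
The plan is to argue by contradiction: assume $e(H[A])\ge 2$ and produce a forbidden $C_4$. The first step, which underpins everything else, is a structural observation about a single blue edge in $H[A]$. Since $A_u$ and $A_v$ are disjoint, any edge $xy\in E(H[A])$ falls into exactly one of three types: both endpoints in $A_u$, both in $A_v$, or one in each. The mixed type is immediately ruled out, because if $x\in A_u$ and $y\in A_v$ then $u,x,y,v$ are four distinct vertices and $ux,xy,yv,vu\in E(G)$ form a $C_4$, contradicting $C_4$-freeness. Thus every blue edge of $H[A]$ lies entirely inside $G[A_u]$ or entirely inside $G[A_v]$.

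Now suppose for contradiction that $e_1,e_2\in E(H[A])$ are distinct. Since $E(H)$ is a clique in $L(G)^2$, the pair $e_1,e_2$ is either incident or joined by some edge $f\in E(G)$. In the incident case, write $e_1=ab$, $e_2=ac$; by the first step, WLOG $a,b,c\in A_u$, and then $u,b,a,c$ is a $C_4$ via the edges $ub,ba,ac,cu$, contradiction. In the disjoint case, write $e_1=ab$, $e_2=cd$ and WLOG let $f=ac\in E(G)$. If $e_1,e_2\subseteq A_u$, then $u,a,c,d$ is a $C_4$ via $ua,ac,cd,du$; if $e_1,e_2\subseteq A_v$, symmetric; and if one lies in $A_u$ and the other in $A_v$ (say $a\in A_u$ and $c\in A_v$), then $u,a,c,v$ is a $C_4$ via $ua,ac,cv,vu$. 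Each configuration yields a forbidden $C_4$, completing the contradiction.

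The argument is essentially pure case analysis on which of $A_u,A_v$ the endpoints of the two blue edges inhabit, and the only mild subtlety is checking that the four vertices of the putative $C_4$ are genuinely distinct in each case (guaranteed by the definitions $A_u\subseteq N_G(u)\setminus\{v\}$ and $A_v\subseteq N_G(v)\setminus N_G[u]$, which force $A_u\cap A_v=\emptyset$ and prevent any overlap with $\{u,v\}$). There is no genuine obstacle here; the claim is really just expressing how restrictive $C_4$-freeness is on $G[N_G(\{u,v\})]$.
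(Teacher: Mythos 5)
Your proof is correct and takes essentially the same route as the paper's: the paper compresses your case analysis into the single observation that two edges of $H[A]$ (being incident or joined by an edge of $G$) always yield a path of order three inside $G[A]$, which then closes into a $C_4$ through $u$ and/or $v$. Your version simply unpacks that one-line argument into explicit cases.
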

 \begin{claimproof} If not, then $G[A]$ must contain a path of order three, which forms a $C_4$ with $u$ and/or $v$.
\end{claimproof}

We now start the case analysis.
\begin{enumerate}[wide, labelwidth=!, labelindent=0pt,noitemsep,font=\bfseries,label=Case \arabic*.,ref=\arabic*]
\item\label{case1} \textit{No vertex in $A$ has two blue neighbours in $B$.}

The first thing to notice is that $A_u$ and $A_v$ each contain at most three vertices with a blue edge to $B$. Indeed, if there are four such vertices $x_1,\ldots,x_4$ with blue neighbours $y_1,\ldots, y_4 \in B$ respectively, then the $(y_i)_{1\le i\le 4}$ must be pairwise distinct to prevent a $C_4$. Therefore the blue edges $(x_iy_i)_{1\le i\le 4}$ are pairwise at distance exactly $2$. There can be at most two edges in $G[\{x_1,x_2,x_3,x_4\}]$ and these must be nonincident, for otherwise they form a $C_4$ with $v$. Say these edges are $x_1x_2$ and $x_3x_4$ (or a subset thereof). Then $y_1y_2y_2y_4$ is a $C_4$, contradiction. 
Second, it cannot be that both $e_H(\{v\},A) \ge 2$ and $e_H(A_u,B)  \ge 2$. Indeed, otherwise there are two vertices in $N_H(v)$ that must be complete to two vertices in $B\cap N_H(A_u)$, thus forming a $C_4$, contradiction. 
So $e_H(\{v\},A) + e_H(A_u,B) \le \Delta$ (where here we also used our assumption that $\Delta \ge 4$). And similarly $e_H(\{u\},A) + e_H(A_v,B) \le \Delta$. It follows that $e(H)$ is at most
\begin{align} \label{eq:firsteq}
&e_H(\{v\},A) + e_H(A_u,B) + e_H(\{u\},A) + e_H(A_v,B) + e(H[A]) + |\{uv\}| \notag \\ 
&\le \Delta+ \Delta + 1 + 1= 2\Delta+2.  
\end{align}
This is bounded from above by $3(\Delta-1)$ if $\Delta \ge 5$. To conclude the same for the case $\Delta=4$, we need to reduce the bound in equation~\eqref{eq:firsteq} by $1$. 

If $e(H[A])=0$, then we get the desired improvement for free.
If, on the other hand, $E(H[A])$ is nonempty, then its unique edge $ab$ has both endpoints in either $N(u)\setminus N(v)$ or in $N(v) \setminus N(u)$, (otherwise $abuv$ would form a $C_4$). Without loss of generality, assume $ab$ is induced by $N(u)\setminus N(v)$. In that case it follows that $E_H(\{v\},A)= \emptyset$ (or otherwise a blue neighbour of $v$ in $A$ would have to be adjacent to $a$ or $b$, forming a $C_4$.)

But then $e_H(\{v\},A) + e_H(A_u,B)\le  0 + \Delta-1$, so we have again reduced the upper bound in~\eqref{eq:firsteq} by $1$, as desired.

\item\label{case2} \textit{At least one vertex in $A$ has two blue edges to $B$.}

Without loss of generality, let $x \in A_u$ be such a vertex and let $x_1^*,x_2^*$ denote two of its blue neighbours in $B$.

\begin{enumerate}[wide, labelwidth=!, labelindent=0pt,noitemsep,font=\bfseries,label=Subcase \arabic{enumi}.\arabic*.,ref=\arabic{enumi}.\arabic*]
\item\label{case2.1}  \textit{$x$ is the only vertex in $A_u$ that has a blue edge to $B$.}
\begin{enumerate}[wide, labelwidth=!, labelindent=0pt,noitemsep,font=\bfseries,label=Subsubcase \arabic{enumi}.\arabic{enumii}.\arabic*.,ref=\arabic{enumi}.\arabic{enumii}.\arabic*]
\item\label{case2.1.1} \textit{$vx \notin E(G)$.}

Suppose there exists $vy \in E_H(\{v\},A_v)$. Then $y\ne x$ because $vx \notin E(G)$. Also, $yx \notin E(G)$ because otherwise $uvyx$ would be a $C_4$. So $y$ must be adjacent to the two blue neighbours $x_1^*, x_2^*$ of $x$ in $B$, in order to have $vy$ within distance $2$ of $xx_1^*$ and $xx_2^*$. But then $xx_1^*x_2^*y$ forms a $C_4$. We deduce 
\begin{align}\label{eq:supidcaseequation1}
E_H(\{v\},A_v) = \emptyset.
\end{align}
We now show that it is impossible for both $e_H(\{u\},A_u) \ge 2$ and $e_H(A_v,B)\ge 1$ to hold. Indeed, suppose there are $x_1,x_2 \in E_H(\{u\},A_u)$ and a blue neighbour $y^*\in B$ of some $y\in A_v$. Since $uy, uy^*, x_1y,x_2y \notin E(G)$ while $yy^*$ must be within distance $2$ of both $ux_2$ and $ux_1$, it follows that $y^*x_1, y^*x_2 \in E(G)$, yielding the $4$-cycle $ux_1x_2y^*$. Contradiction.

If $e_H(A_v,B)=0$ then $e(H) \le |N_H(u) \cup N_H(x)| + e(H[A]) \le (2\Delta-1)+1 \le 3(\Delta-1),$ as desired. So we may from now on assume that 
\begin{align}\label{eq:supidcaseequation2}
e_H(\{u\},A_u)\le 1.
\end{align}
Next, we want to show that $e_H(A_v,B) \le 4$.
Suppose for a contradiction that $e_H(A_v,B) \ge 5$.

Suppose first that there exists $y\in A_v$ with (at least) three blue neighbours $y_1^*,y_2^*,y_3^*$ in $B$. Recall that $x_1^*, x_2^* \in B$ are two blue neighbours of $x$. Since $\{x_1^*,x_2^*\} $ has at most one element in common with $\{y_1^*,y_2^*,y_3^*\}$  (otherwise there is a $C_4$) we may without loss of generality assume that $\{x_1^*,x_2^*\}\cap \{y_1^*,y_2^*\}=\emptyset$. If $xy_1^*,xy_2^*,yx_1^*,yx_2^* \notin E(G)$, then $\{x_1^*,x_2^*\}$ must be complete to $\{y_1^*,y_2^*\}$, yielding a $C_4$. So without loss of generality $xy_1^* \in E(G)$. This implies $xy_2^*,yx_1^*,yx_2^* \notin E(G)$ (otherwise there is a $C_4$ containing $x$ and $y$). So in order to have $yy_2^*$ within distance $2$ of $xx_1^*$ and $xx_2^*$, we must have $x_1^*y_2^*, x_2^*y_2^* \in E(G)$, yielding $xx_1^*y_2^*x_2^*$ as a $C_4$. Contradiction. So we have derived that each $y\in A_v$ has at most two blue neighbours in $B$.

Now suppose that some vertex $y_{12}\in A_v$ has two blue neighbours $y_1^*,y_2^*$ in $B$. By the argument in the previous paragraph, it may not be that $|\{x_1^*,x_2^*\}\cap \{y_1^*,y_2^*\}| \in \{0,2\}$, so without loss of generality $x_2^*=y_2^*$. 

Additionally suppose there is another vertex $y_{34}\in A_v$ with two blue neighbours $y_3^*,y_4^*$ in $B$. By the same argument, one of $\{y_3^*,y_4^*\}$ is equal to one of $\{x_1^*,x_2^*\}$. But $x_2^*=y_2^* \notin \{y_3^*,y_4^*\}$ (otherwise there is a $C_4$ containing $y_2^*$ and $v$), so without loss of generality $y_4^*=x_1^*$.
Since we assumed that $e_H(A_v,B) \ge 5$, there is yet another vertex $y_{5}\in A_v$ with (at least) one neighbour $y_5^*\in B$. Since $y_5y_5^*$ must be within distance $2$ of $xx_1^*$ and $xx_2^*$ it follows that $xy_5^* \in E(G)$. Since $G[A]$ does not contain a path of order $3$ (otherwise there is a $C_4$), at least one of $y_5y_{12}, y_5y_{34}$ is not an edge. Without loss of generality, $y_5y_{12} \notin E(G)$.  Then, in order to have $y_5y_5^*$ within distance $2$ of $y_{12}y_2^*$ and $y_{12}y_1^*$, we must either have $y_5^*y_{12} \in E(G)$ (in which case $y_5^*xy_2^*y_{12}$ is a $C_4$) or $y_5^*y_1^*,y_5^*y_2^* \in E(G)$ (in which case $y_5^*y_1^*y_{12}y_2^*$ is a $C_4$). Contradiction.

Thus we have derived that $y_{12}$ is the only vertex in $A_v$ with two blue neighbours in $B$ (namely $y_1^*$ and $y_2^*$). Since we assumed $e_H(A_v,B) \ge 5$, there are three other vertices $y_3,y_4,y_5 \in A_v$ with unique blue neighbours $y_3^*,y_4^*,y_5^* \in B$, respectively. Since $G[A]$ does not contain a path of order $3$, the complement of the graph induced by $Y=\{y_{12},y_3,y_4,y_5\}$ contains a $C_4$. This implies there is a $C_4$ in the graph induced by $\{y_1^*,y_2^*,y_3^*,y_4^*,y_5^*\}$, the set of blue neighbours of $Y$ in $B$. Contradiction.

Thus, we have derived that no vertex in $A_v$ has more than one blue neighbour in $B$. Now let $y_1,\ldots, y_5 \in A_v$ be vertices with respective unique blue neighbours $y_1^*,\ldots, y_5^* \in B$. Since $G[A]$ does not contain a path of order $3$, the complement of the graph induced by $\{y_{1},y_2,y_3,y_4,y_5\}$ contains a $C_4$. This implies there is a $C_4$ in the graph induced by $\{y_1^*,y_2^*,y_3^*,y_4^*,y_5^*\}$. Contradiction.

This concludes our proof that 
$e_H(A_v,B) \le 4$.
Then together with~\eqref{eq:supidcaseequation1} and~\eqref{eq:supidcaseequation2}, it follows that
\begin{align*}
e(H) &\le e_H(\{u\},A_u) + |\{uv\}| + e_H(\{x\},B) + e_H(A_v,B) + e(H[A])\\
& \le 1 + 1 + (\Delta-1) + 4 = \Delta + 5
\le 3(\Delta-1).
\end{align*}

\item\label{case2.1.2} \textit{$vx \in E(G)$.}

Suppose there exists an edge $yy^* \in E_H(A_v,B)$, with $y \in A_v$. Then the absence of $4$-cycles dictates that $y$ is not adjacent to $x$ nor to any of its blue neighbours in $B$. Therefore $y^*$ is adjacent to all blue neighbours of $x$ in $B$, of which there are at least two by assumption. But then these neighbours form a $C_4$ with $x$ and $y^*$. Contradiction. So $E_H(A_v,B)= \emptyset$ and therefore all edges of $H$ are incident to the triangle $uxv$. So $e(H) = e(G[N_H(u) \cup N_H(x) \cup N_H(v)]) \le 3 (\Delta-1)$.
\end{enumerate}
\item\label{case2.2} \textit{There is another vertex $x_2$ in $A_u$ that has a blue edge to $B$.}

Note that in this case $xx_2 \in E(G)$, for otherwise there would be a $C_4$ in the graph induced by $u,x,x_2$ and the blue neighbours of $x$ and $x_2$ in $B$. Note furthermore that there cannot be a third vertex $x_3\in A_u$ that has a blue edge to $B$, for otherwise the same argument yields $xx_3\in E(G)$ so that $x_2xx_3u$ would yield a $C_4$.

\begin{enumerate}[wide, labelwidth=!, labelindent=0pt,noitemsep,font=\bfseries,label=Subsubcase \arabic{enumi}.\arabic{enumii}.\arabic*.,ref=\arabic{enumi}.\arabic{enumii}.\arabic*]
\item\label{case2.2.1} \textit{$vx \notin E(G)$.}

First, suppose there exists a blue edge $vy \in E_H(\{v\},A)$. Then $y \ne x $ (by assumption) and $y \ne x_2$  and $yx \notin E(G)$ (for otherwise $uvyx$ is a $C_4$). Since $vy$ must be within distance $2$ of the (blue) edges in $E_H(\{x\},B)$, it follows that $y$ must be adjacent to both blue neighbours $x_1^*, x_2^*$ of $x$ in $B$. But then $xx_1^* x_2^* y $ forms a $C_4$. Contradiction. So we conclude that $E_H(\{v\},A)= \emptyset$.
Second, suppose there is an edge $yy^* \in E_H(A_v,B)$, with $y \in A_v$ and $y^*\in B$. Let $z_1^*, z_2^*$ be two blue neighbours of $x$ in $B$ and let $z_3^*$ be a blue neighbour of $x_2$ in $B$.  Recall that $xx_2 \in E(G)$ and, as before, $y \notin \{x,x_2\}$ and $yx, yx_2 \notin E(G)$. So in order to have $yy^*$ within distance $2$ of $xz_1^*, xz_2^*$ and $x_2z_3^*$, we must have for all $i\in \{1,2,3\}$ that either $y^*z_i^* \in E(G)$ or $y^*=z_i^*$, and $y^*$ can be equal to only one of the $z_i^*$. If $y^* =z_3^*$ then $xz_1^*z_2^*y^*$ will form a $C_4$. On the other hand, if (without loss of generality) $y^*=z_1^*$, then $xy*z_3^*x_2$ forms a $C_4$. Contradiction. We conclude that $E_H(A_v,B)$ must be empty too. It follows that all edges of $H$ are incident to the triangle $uxx_2$, so $e(H) \le 3 (\Delta-1)$.

\item\label{case2.2.2} \textit{$vx \in E(G)$.}

By the argument of Subsubcase~\ref{case2.1.2}, $E_H(A_v,B)=\emptyset$.

\begin{enumerate}[wide, labelwidth=!, labelindent=0pt,noitemsep,font=\bfseries,label=Subsubsubcase \arabic{enumi}.\arabic{enumii}.\arabic{enumiii}.\arabic*.,ref=\arabic{enumi}.\arabic{enumii}.\arabic{enumiii}.\arabic*]
\item\label{case2.2.2.1} \textit{$E_H(\{v\}, A_v) \ne \emptyset$.}

Let $vy \in E_H(\{v\},A_v)$ and $x_2x_2^* \in E_H(A_u,B)$. Since $x_2y, vx_2 \notin E(G)$ (otherwise $uvyx_2$  or $ uvx_2 x$ is a $C_4$), we must have $yx_2^* \in E(G)$. This holds for all such pairs, so in order to prevent a $C_4$, we must have $e_H(\{v\},A_v) + e_H(\{x_2\},B) \le 2$. So  $e(H) \le e_H(\{v\},A_v) + e_H(\{x_2\},B) + |N_H(x) \cup N_H(u)| + e_H(A_v,B) \le 2 + (2\Delta-1) + 0 = 2\Delta +1$.
This is bounded from above by $3(\Delta -1)$ if $\Delta \ge 4$, which holds in this subcase because $x$ is adjacent to $u,v,x_2$ and its two or more neighbours in $B$.

\item\label{case2.2.2.2} \textit{$E_H(\{v\}, A_v) = \emptyset$.}

In this case all edges of $H$ are incident to the triangle $uxx_2$, so $e(H)\le 3 (\Delta-1)$. \qedhere

\end{enumerate}
\end{enumerate}
\end{enumerate}
\end{enumerate}
\end{proof}

\section{Bipartite and two forbidden cycle lengths}\label{sec:bip}

In this section, we prove Theorem~\ref{thm:bip}. Let us begin with the basic reduction from the $\{C_3,C_5\}$-free to the bipartite setting.

\begin{lemma}\label{lem:reduction}
Let $\cal G$ be a class of $\{C_3,C_5\}$-free graphs that is invariant under vertex-deletion.
Let $\cal G_{\bip}$ be the class of graphs in $\cal G$ that are bipartite.
Then, provided both are well-defined, $\max_{G\in \cal G} \omega_2'(G) = \max_{G\in \cal G_{\bip}} \omega_2'(G)$.
\end{lemma}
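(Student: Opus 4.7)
The easy direction $\max_{G\in\cal G_{\bip}}\omega_2'(G)\le\max_{G\in\cal G}\omega_2'(G)$ is immediate from $\cal G_{\bip}\subseteq\cal G$. For the converse, the plan is to start from any $G\in\cal G$ attaining the maximum and, by restricting to a carefully chosen induced subgraph, to produce a bipartite member of $\cal G$ whose strong clique number is at least as large.

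Fix a strong clique $F$ in $G$ with $|F|=\omega_2'(G)$ and choose any edge $uv\in F$. Let $B$ be the set of vertices of $G$ at distance at most $2$ from $\{u,v\}$, and set $G':=G[B]$. Since $\cal G$ is closed under vertex-deletion, $G'\in\cal G$. I would first verify $V(F)\subseteq B$: for any $e\in F$, the edges $e$ and $uv$ are at $L(G)$-distance at most $1$, so either $e$ shares a vertex with $uv$ (placing both of its endpoints within distance $1$ of $\{u,v\}$) or $e$ is joined to $uv$ by an edge of $G$ (placing one endpoint of $e$ in $N(\{u,v\})$ and the other, a neighbour thereof, within distance $2$ of $\{u,v\}$). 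Any edge of $G$ certifying that a pair of edges of $F$ is close must itself have both endpoints in $V(F)\subseteq B$, so $F$ remains a strong clique in $G'$ and $\omega_2'(G')\ge|F|$.

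The substantive step is showing that $G'$ is bipartite. I would propose the partition of $B$ whose first part $V_0$ consists of $u$ together with all vertices at distance exactly $2$ from $u$, and whose second part $V_1$ consists of $N(u)$ together with all vertices at distance exactly $2$ from $v$ that are at distance more than $2$ from $u$. Triangle-freeness forces $N(u)\cap N(v)=\emptyset$, so $N(v)\setminus\{u\}$ sits at distance exactly $2$ from $u$, and from this it is routine to check that $V_0$ and $V_1$ do partition $B$. The essential ingredient is the following claim, valid for any vertex $w$ of a $\{C_3,C_5\}$-free graph: the set of vertices at distance exactly $2$ from $w$ is independent. Indeed, any edge $xy$ in this set has neighbours $x',y'\in N(w)$; distinctness $x'\ne y'$ is forced by $C_3$-freeness (else $x'xy$ is a triangle), and then $wx'xyy'$ is a forbidden $C_5$. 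Combining this (applied to both $w=u$ and $w=v$) with triangle-freeness inside $N(u)$, and with the fact that by definition no edge can go from $N(u)$ to a vertex at distance more than $2$ from $u$, both $V_0$ and $V_1$ are independent in $G'$.

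The main obstacle, as one would expect, is precisely this bipartiteness step: since the smallest odd cycles of $G$ have length at least $7$, they can in principle fit inside a $2$-ball, and one cannot bound the odd girth of $G'$ directly from its diameter. An explicit bipartition must be exhibited, and this is the point at which the $C_3$- and $C_5$-forbiddances are simultaneously invoked. Once $G'$ is shown bipartite, the conclusion is immediate: $G'\in\cal G_{\bip}$ and $\omega_2'(G')\ge|F|=\omega_2'(G)=\max_{G\in\cal G}\omega_2'(G)$, which forces equality.
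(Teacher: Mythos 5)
Your proposal is correct and takes essentially the same route as the paper: both pass to the subgraph induced by the $2$-ball $B$ around an edge $uv$ of a maximum strong clique, note the strong clique survives there, and invoke $\{C_3,C_5\}$-freeness to conclude bipartiteness (your explicit bipartition of $B$, via the observation that the second neighbourhood of any vertex in a $\{C_3,C_5\}$-free graph is independent, spells out a step the paper leaves implicit). The only slip is the phrase ``$L(G)$-distance at most $1$'', which should read ``at most $2$''; the two cases you then enumerate are the correct ones.
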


\begin{proof}
Clearly, $\max_{G\in {\cal G_{\bip}}} \omega_2'(G) \le \max_{G\in {\cal G}} \omega_2'(G)$, so it remains to prove the converse.
Given a graph $G$ in ${\cal G}$ we choose a subgraph $H$ of $G$ whose edges form a maximum clique in $L(G)^2$ and we choose an edge $uv\in E(H)$. Consider the induced subgraph $G^*:=G[N(u)\cup N(v)\cup N(N[u]) \cup N(N[v])]$. The fact that $G$ is $\{C_3,C_5\}$-free implies that $G^*\in {\cal G_{\bip}}$. Since $H$ is a subgraph of $G^*$ and moreover all possible edges (in $G$) between edges of $H$ are contained in $G^*$, it follows that $\omega(L(G^*)^2) = e(H)=\omega(L(G)^2)$. We conclude that $\max_{G\in {\cal G}} \omega_2'(G) = \max_{G\in {\cal G}} \omega_2'(G^*) \leq \max_{G\in {\cal G_{\bip}}} \omega_2'(G)$.
\end{proof}

Thus for Theorem~\ref{thm:bip} it suffices to prove the following result.
\begin{theorem}\label{thm:bip,reduced}
For a $\{C_{2k},C_{2k+2}\}$-free bipartite graph $G$ with $\DeltaG=\Delta$, $\omega_2'(G) \le \max\{k\Delta, 2k(k-1)\}$.
\end{theorem}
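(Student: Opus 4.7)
The plan is to pick an edge $uv$ of a subgraph $H\subseteq G$ realizing $\omega_2'(G)$, chosen to maximize $d_u+d_v$ where $d_w := \deg_H(w)$. Let $U,V$ be the two parts of $G$ with $u \in U$, $v \in V$; write $A_u = N_G(u) \setminus \{v\} \subseteq V$, $A_v = N_G(v) \setminus \{u\} \subseteq U$, $B_U = U \setminus (\{u\}\cup A_v)$ and $B_V = V \setminus (\{v\}\cup A_u)$. By the distance-$2$ condition applied to $uv$ in a bipartite graph, every $H$-edge $pq$ (with $p \in U$, $q \in V$) not incident to $\{u,v\}$ must satisfy $p \in A_v$ or $q \in A_u$; and no $H$-edge can lie entirely between $B_U$ and $B_V$, since such an edge would be at distance $\ge 3$ from $uv$. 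This partitions
\[
E(H) = I_H(\{u,v\}) \sqcup E_H(A_u, A_v) \sqcup E_H(A_u, B_U) \sqcup E_H(A_v, B_V).
\]

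I would then bound each piece. First, $G[A_u \cup A_v]$ is $P_{2k-2}$-free: any path of order $2k-2$ has its endpoints in opposite parts (by bipartiteness), and closes with $u$ and $v$ into a $C_{2k}$. Lemma~\ref{lem:ErGa} gives $|E_H(A_u, A_v)| \le (k-2)(\Delta - 1)$. Second, for each $pq \in E_H(A_u, B_U)$ with $p \in B_U$ and $q \in A_u$, the distance-$2$ condition to every $H$-edge at $v$, combined with $p \notin A_v$, forces $N_H(v) \subseteq N_G(q)$, that is, $q$ is \emph{$v$-dominating}. Since $G$ is bipartite and $C_{2k}$-free, it contains no $K_{k,k}$ subgraph, so (assuming $d_v \ge k$) at most $k-1$ vertices in $V$ are $v$-dominating, one of which is always $v$ itself, leaving at most $k-2$ available inside $A_u$. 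Each such $q$ satisfies $\{u\} \cup N_H(v) \subseteq N_G(q)$, a set of $d_v + 1$ vertices, so $|N_G(q) \cap B_U| \le \Delta - 1 - d_v$, giving $|E_H(A_u, B_U)| \le (k-2)\max(0,\Delta - 1 - d_v)$; symmetrically for $E_H(A_v, B_V)$. Combining these with $|I_H(\{u,v\})| \le d_u + d_v - 1$, the sum attains exactly $k\Delta - k + 1$ in the balanced regime $d_u = d_v = \Delta$, matching the hairy bipartite extremal example.

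The hard part will be handling the degenerate regimes, namely when $\min(d_u, d_v) < k$ (so the $K_{k,k}$-based bound on $v$-dominating vertices fails) or when the maximum $H$-degree is moderate so that even the optimal choice of $uv$ does not bring $d_u + d_v$ close enough to $2\Delta$. In such regimes the strong clique itself is forced to be quite small: either a direct degree-counting argument, showing $|V(H)| = O(k)$ when the max $H$-degree is below $k$, or an application of the additional $C_{2k+2}$-freeness (which restricts $P_{2k+1}$-style paths with endpoints in $A_u$, by closing them with $u$ into a $C_{2k+2}$) should suffice to establish the fall-back bound $|E(H)| \le 2k(k-1)$, which dominates when $\Delta < 2(k-1)$. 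Taken together, this would yield the claimed $\max\{k\Delta, 2k(k-1)\}$.
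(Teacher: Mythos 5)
Your decomposition around a single $H$-edge $uv$ is a genuinely different route from the paper's proof (which iteratively extends ``$H$-sided'' paths and counts the $H$-edges incident to a path that cannot be extended), but as written it has two serious gaps. First, the bound $|E_H(A_u,A_v)|\le (k-2)(\Delta-1)$ does not follow from Lemma~\ref{lem:ErGa}: applied to the $P_{2k-2}$-free graph $G[A_u\cup A_v]$, that lemma gives $e\le (k-2)\,|A_u\cup A_v|\le 2(k-2)(\Delta-1)$, twice what you claim, and the factor of $2$ cannot be recovered by a purely Tur\'an-type argument --- two vertex-disjoint copies of $K_{k-2,\cdot}$, one with its side of size $k-2$ inside $A_v$ and the other with its side of size $k-2$ inside $A_u$, form a $P_{2k-2}$-free bipartite graph on $A_u\cup A_v$ with roughly $2(k-2)(\Delta-1)$ edges. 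With the honest bound, even your best (balanced) case totals about $2\Delta+2(k-2)(\Delta-1)\approx 2(k-1)\Delta$, a factor of $2$ above the target, so the main term $k\Delta$ is not actually obtained; one would have to exploit the strong-clique condition among the edges of $E_H(A_u,A_v)$, not just $P_{2k-2}$-freeness, and it is not clear how.

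Second, the ``degenerate regimes'' you defer are not a boundary case but the bulk of the proof. In your sum $(d_u+d_v-1)+(k-2)(\Delta-1)+(k-2)\max\{0,\Delta-1-d_v\}+(k-2)\max\{0,\Delta-1-d_u\}$ the coefficient of $d_u+d_v$ is $1-(k-2)\le 0$ for $k\ge 3$, so the estimate is \emph{worst}, not best, when $d_u+d_v$ is small: the balanced case $d_u=d_v=\Delta$ is the minimum of your bound, and for, say, $d_u=d_v=k$ your own inequalities only give roughly $3(k-2)\Delta$. Thus the claim that the pieces ``sum to $k\Delta-k+1$'' holds only at one extreme, and no argument is supplied for the remaining range of $(d_u,d_v)$, for how $C_{2k+2}$-freeness is to be used, or for the $2k(k-1)$ fallback. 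For comparison, the paper sidesteps all of this by assuming $e(H)>\max\{k\Delta,2k(k-1)\}$ and showing that every $H$-sided path of order less than $2k+1$ can be lengthened by $1$ or $2$ (via the extravert/introvert analysis of Claims~\ref{cl:bip0}--\ref{cl:bip7} and the count of obsolete edges), which forces a $P_{2k+1}$ and hence a forbidden $C_{2k}$ or $C_{2k+2}$.
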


\begin{proof}
Let $G=G[X\times Y]$ be bipartite and $\{C_{2k},C_{2k+2}\}$-free with $\DeltaG=\Delta$. 
By Theorem~\ref{thm:FSGT}, we may assume throughout that $k\le \Delta$.
Let $H$ be a subgraph of $G$ whose edges form a maximum clique in $L(G)^2$, so that $e(H)= \omega(L(G)^2)$. A path in $G$ will be called \textit{$H$-sided} if it starts and ends on edges of $H$. Given a vertex $v\in V(G)$, an \textit{$H$-neighbour} of $v$ is a vertex $w \in N_H(v)$.

Assume that $\omega(L(G)^2) > \max\{k \Delta, 2k(k-1)\}$. Under this assumption, we want to derive that for any $H$-sided path $P$ of order smaller than $2k+1$, we can find another $H$-sided path that has order $|P|+1$ or $|P|+2$, which is sufficient by the following claim.

\begin{claim}\label{cl:bip_minus2}
Suppose that for each $H$-sided path $P$ in $G$ of order $|P|<2k+1$, we can find another $H$-sided path of order $|P|+1$ or $|P|+2$. Then $G$ contains $P_{2k+1}$ as a subgraph, and also contains a copy of $C_{2k+2}$ or $C_{2k}$.
\end{claim}
\begin{claimproof}
Because $e(H) \ge 1$, there exists an $H$-sided path of order $2$. We can iteratively extend the length of this path by $1$ or $2$, ultimately yielding an $H$-sided path $P$ of order in $\{2k+1,2k+2\}$. In particular, $G$ contains a path of order $2k+1$, as desired. The first and final edge of $P$ are in $H$ and therefore (also using that $|P| \ge 2k+1 \ge 5$) they must be at distance \textit{exactly} $2$. Since $G$ is bipartite, this implies the existence of a cycle of order in $\{ |P|, |P|-2\}$ if $|P|$ is even, and a cycle of order $|P|-1$ if $|P|$ is odd. So $G$ has a cycle of order in $\{2k,2k+2\}$. 
\end{claimproof}

Let $P$ be an $H$-sided path. For clarity of notation we assume from now on that $P$ has even order $2\ell$, for some $\ell \le k$. For paths of odd order $<2k+1$ the arguments are similar and in fact slightly easier, because the bounds we need are slightly more forgiving in that case. Write $P=p_1p_2 \ldots  p_{2\ell}$.

First, we need to introduce some definitions. Let $X_P= X \cap V(P)=p_1p_3\ldots p_{2\ell-1}$ and $Y_P= Y \cap V(P)=p_2p_4 \ldots p_{2\ell}$ be the two parts of the bipartite graph induced by $P$. A vertex of $P$ will be called \textit{$r$-extravert} if its number of $H$-neighbours outside $P$ is at least $r$. For short, we call the vertex \textit{extravert} if it is $1$-extravert. Conversely, a vertex of $P$ is \textit{introvert} if all of its $H$-neighbours are in $P$. By $P_{\Ext}^{(r)}$ and $P_{\Ext}$ we denote the set of $r$-extravert vertices and extravert vertices respectively, and $P_{\Int}$ denotes the set of introvert vertices. Finally, by $\Obs(P)$  we will denote the set of \textit{obsolete edges}, which by definition are those edges of $H$ that are incident to some vertex of $P \setminus \{p_1,p_{2\ell} \}$. We call them obsolete because they cannot be `greedily' used to extend the order of $P$.

From now on, suppose for a contradiction that it is \textit{not} possible to find an $H$-sided path of order $|P|+1$ or $|P|+2$. Then the following claims hold.

\begin{claim}\label{cl:bip0} The first and final vertex of $P$ are introvert.
\end{claim}
\begin{claimproof}
Suppose by symmetry that the first vertex $p_1$ is extravert. Then it has an $H$-neighbour $p_0$ outside $P$, so $p_0P$ is an $H$-sided path of order $|P|+1$. Contradiction.
\end{claimproof}

\begin{claim}\label{cl:bip_minus1}
 $|\Obs(P)| >  \max\{k\Delta, 2k(k-1)\}$.
\end{claim}
\begin{claimproof}
Suppose not. Then $|\Obs(P) | \le \max\{k\Delta, 2k(k-1) \} < e(H)$. Therefore there exists an edge $e^*$ in $H$ that is not incident to any vertex of $P$. The final edge $e$ of $P$ is in $H$, so $e^*$ and $e$ must be at distance \textit{exactly} $2$. This implies that we can extend $P$ to an $H$-sided path (ending on $e^*$ rather than $e$) that is of order $|P|+1$ or $|P|+2$. Contradiction.
\end{claimproof}

So in order to arrive at a contradiction, it suffices to show that either $|\Obs(P)| \le k \Delta$ or $|\Obs(P)| \le  2k(k-1)$. We will now derive some structural properties of our counterexample.

\begin{claim}\label{cl:bip1}
 Any two extravert vertices in the same part (both in $X_P$ or both in $Y_P$) have a common neighbour outside $P$.
\end{claim}
\begin{claimproof}
Indeed, suppose without loss of generality that $p_i,p_j$ are two extravert vertices in $X_P$, with $H$-neighbours $q_i$ respectively $q_j$ outside $P$. If $q_i=q_j$ we are done, so suppose $q_i\ne q_j$. The edges $p_iq_i$ and $p_jq_j$ need to be within distance $2$. Since odd cycles are not allowed in $G$, it follows that $p_ip_j, q_iq_j \notin E(G)$, so $q_i$ or $q_j$ must be a common neighbour of $p_i$ and $p_j$.
\end{claimproof}

\begin{claim}\label{cl:bip2}
$P$ contains at most two pairs of consecutive extravert vertices, and if there are two such pairs $p_i p_{i+1}$ and $p_j p_{j+1}$, then they must have different parity, in the sense that $i=j+1$ (mod $2$).
\end{claim}
\begin{claimproof}
Suppose there are two extravert pairs $p_ip_{i+1}$, $p_jp_{j+1}$ of the same parity. Then without loss of generality $i+1 < j$ and $p_i,p_j \in X_P$ and $p_{i+1},p_{j+1} \in Y_P$. See Figure~\ref{fig:claimbip2and7}. By Claim~\ref{cl:bip1}, $p_i$ and $p_j$ have a common neighbour $u \in Y \setminus Y_P$, and $p_{i+1}$ and $p_{j+1}$ have a common neighbour $v \in X \setminus X_P$. Therefore we can replace the subpath $P^*=p_i p_{i+1}\ldots p_j p_{j+1}$  of $P$ by $ p_i u p_j p_{j-1} \ldots p_{i+2} p_{i+1} v p_{j+1}$, which uses the same vertices as $P^*$ \textit{and} two extra vertices $u,v$ outside of $P$.  Thus, we have constructed an $H$-sided path of order $|P|+2$. Contradiction.
\end{claimproof}

\begin{figure}
 \begin{center}
   \begin{overpic}[width=0.95\textwidth]{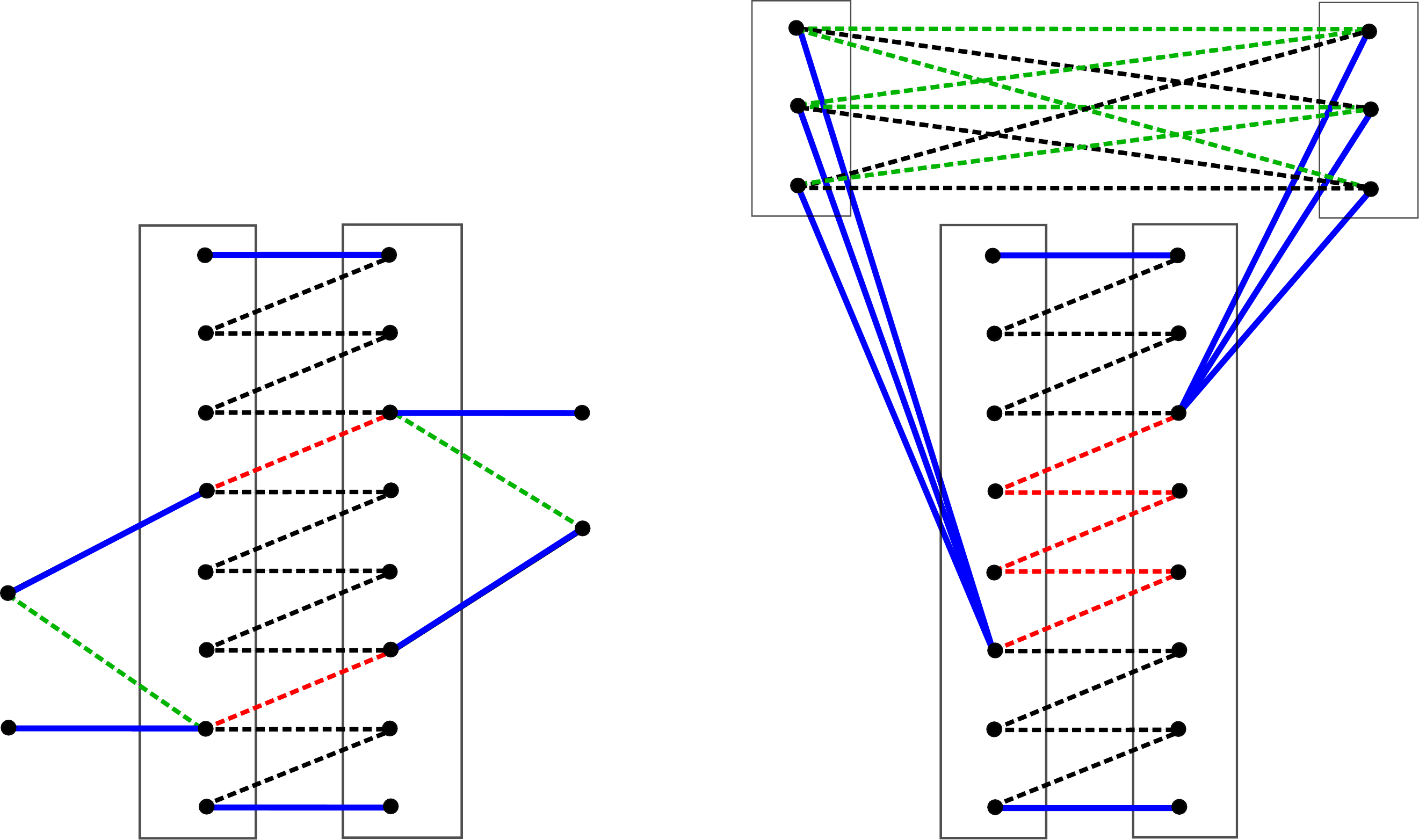}
    \put (9.8,41){ $p_1$}
    \put (27,31.9){ $p_{i}$}
    \put (10.2,26.3){ $p_{i+1}$}
    \put (28.1,12.6){ $p_{j}$}
    \put (10.3,5.6){ $p_{j+1}$}
    \put (27.3,2.1){ $p_{2\ell}$}
    \put (-3.2,17){ $v$}
    \put (41.7,21.6){ $u$}
    
    \put (48.4,51){ $A$}
    \put (99.8,51){ $B$}
    \put (68,10.65){ $a$}
    \put (80.6,31.2){ $b$}   
     
    \put (3.15,1){ $X_P$} 
    \put (33.4,1){ $Y_P$} 
    \put (59.65,1){ $X_P$} 
    \put (87.5,1){ $Y_P$} 
 \end{overpic}
  \caption{A depiction of the contradictory path-extensions described by Claim~\ref{cl:bip2} (left) and Claim~\ref{cl:bip7} (right). On the right, $a$ and $b$ are non-adjacent $3$-extravert vertices and the subpath of $P$ between $a$ and $b$ has order $6$. This means that $a$ and $b$ are too close to each other, with respect to $P$. Indeed, by following the green edges (and two blue edges) rather than the red edges, we obtain an $H$-sided path of order $|P|+2$. \label{fig:claimbip2and7}}
\end{center}
\end{figure}

The next claim is arguably the heart of the argument.
\begin{claim} \label{cl:bip3}
There are at most $\ell$ extravert vertices.
\end{claim}
\begin{claimproof}

Consider the vertex pairs $(p_2,p_3)$, $(p_4,p_5)$, $\ldots$, $(p_{2\ell-2},p_{2\ell-1})$. By Claim~\ref{cl:bip0}, all extravert vertices are contained in the union of these $\ell-1$ pairs. So if there are more than $\ell$ extravert vertices, then by the pigeonhole principle at least two pairs entirely consist of extravert vertices. We have obtained two same-parity pairs of consecutive extravert vertices, contradicting Claim~\ref{cl:bip2}.
\end{claimproof}

 From now on, let $r \ge 0$ be the maximal integer (if it exists) 
  such that there are nonadjacent $r$-extravert vertices $s,t$ with $s \in X_P$ and $t\in Y_P$. 
\begin{claim} \label{cl:bip4} 
The integer $r$ is well-defined.
\end{claim}
\begin{claimproof}
Suppose $r$ does not exist. Then the vertices of $P$ induce a complete bipartite graph, with parts $X_P$ and $Y_P$. By Claim~\ref{cl:bip3} we have $|P_{\Ext}| \le \ell$, and therefore $|\Obs(P)| \le |X_P| |Y_P| + |P_{\Ext}| (\Delta- \min\{|X_P|,|Y_P|\}) = \ell^2 + \ell (\Delta-\ell) \le k \Delta$, contradicting Claim~\ref{cl:bip_minus1}.
\end{claimproof}

The next claim follows directly from the definition of $r$.

\begin{claim} \label{cl:bip5} 
 The graph induced by $P_{\Ext}^{(r+1)}$ is complete bipartite.
\end{claim}

Next, we show that highly extravert vertices of different parity cannot be too close to eachother with respect to $P$.

\begin{claim} \label{cl:bip7}
Let $q$ be a positive integer. Let $a\in X_P, b \in Y_P$ be two non-adjacent $q$-extravert vertices.
Then the subpath of $P$ having endpoints $a$ and $b$ has at least $2q+2$ vertices.
\end{claim}
\begin{claimproof}
Suppose for a contradiction that the subpath of $P$ with endpoints $a$ and $b$ has (even) order $d\leq 2q$.
Let $A= \{a_1, \ldots, a_q \}$ denote a subset of the $H$-neighbours of $a$ in $Y \setminus Y_P$. Similarly, let $B=\{b_1, \ldots, b_q \}$ denote a subset of the $H$-neighbours of $b$ in $X \setminus X_P$. See Figure~\ref{fig:claimbip2and7}. Because $ab \notin E(G)$ and the $H$-edges $a_ia$, $b_jb$ should be within distance $2$ for all $i$, $j$, it follows that $A$ is complete to $B$. Therefore there exists a path $P^*=aa_1b_1a_2b_2 \ldots a_q b_q b$ of order $d+2$ that only intersects $P$ in $a$ and $b$. This leads to a contradiction, because it implies that we can construct an $H$-sided path of order $|P|+2$, by replacing the order $d$ subpath of $P$ between $a$ and $b$ with the order $d+2$ path $P^*$.
\end{claimproof}

With the above claims, we now complete the proof of Theorem~\ref{thm:bip,reduced} by deriving a contradiction to Claim~\ref{cl:bip_minus1}.

We partition the vertices of $P$ and estimate the $H$-edges incident to them separately. First we need some definitions. Let $i_x=|P_{\Ext}^{(r+1)} \cap X_P|$ and $i_y=|P_{\Ext}^{(r+1)} \cap Y_P|$ be the numbers of $(r+1)$-extravert vertices in the parts $X_P, Y_P$ of the bipartite graph induced by $P$. Similarly, let $j_x= |P_{\Ext}\setminus P_{\Ext}^{(r+1)} \cap X_P|$ and $j_y=|P_{\Ext}\setminus P_{\Ext}^{(r+1)} \cap Y_P|$ be the number of vertices that are extravert but not $(r+1)$-extravert, in part $X_P$ respectively $Y_P$. Note that the remaining $|X_P|-i_x-j_x$  (resp. $|Y_P|-i_y-j_y$)  vertices in $X_P$ (resp. $Y_P$) are introvert. 

An important observation\label{edgepartition:bip9} is that we can write $\Obs(P)$ as a disjoint union $E_1 \cup E_2 \cup E_3$, where 
\begin{align*}
E_1 = I_H(P_{\Ext}^{(r+1)}), \,\,\,\,\, 
E_2 = I_H(P_{\Ext})\setminus I_H(P_{\Ext}^{(r+1)}), \,\text{ and }\, 
E_3 = E(H[P_{\Int}]). 
\end{align*}

Recall from Claim~\ref{cl:bip5} that $G[P_{\Ext}^{(r+1)}]$ is complete bipartite, so it is efficient to estimate $|E_1|$ by summing the degrees (with respect to $G$) of $P_{\Ext}^{(r+1)}$ and subtracting the double-counted edges of $G[P_{\Ext}^{(r+1)}]$. This yields
 \begin{align}\label{eq:estimateE1}
 |E_1|\le - e(G[P_{\Ext}^{(r+1)}]) + \sum_{v \in P_{\Ext}^{(r+1)}} | N_G(v) |  \le  -i_x i_y + (i_x+i_y) \Delta.
 \end{align}
To estimate $|E_2|$, note that it is maximised if each vertex $v \in P_{\Ext} \setminus P_{\Ext}^{(r+1)}$ has exactly $r$ $H$-neighbours outside $G[P]$ and is incident to all vertices of the opposite part that are not in $P_{\Ext}^{(r+1)}$ (and leaving out one single edge from this graph, to comply with the non-edge that defines $r$). In this case
\begin{align}\label{eq:estimateE2}
|E_2| &\le - e(H[P_{\Ext} \setminus P_{\Ext}^{(r+1)}]) + \sum_{v \in P_{\Ext} \setminus P_{\Ext}^{(r+1)}} |N_H(v) \setminus P_{\Ext}^{(r+1)}|\\ \nonumber
 & \le -j_xj_y + j_x (r+|Y_P|-i_y) + j_y (r+|X_P|-i_x).
\end{align}
The quantity $|E_3|$ is maximised if $P_{\Int} $ induces a complete bipartite graph, so 
 \begin{align}\label{eq:estimateE3}
 |E_3| \le (|X_P|-i_x-j_x) (|Y_P|-i_y-j_y).
 \end{align}

Summing estimates~\eqref{eq:estimateE1},~\eqref{eq:estimateE2} and~\eqref{eq:estimateE3}, we conclude that 
\begin{align}\label{eq:estimateE1E2E3}
|\Obs(P)| 
&\le (i_x+i_y) \Delta + (j_x+j_y) r + |X_P| |Y_P| - i_x |Y_P| -i_y |X_P| \nonumber \\
&= (i_x+i_y) (\Delta-\ell) + (j_x+j_y) r + \ell^2 .
\end{align}
If $\Delta-\ell \ge r$ then (\ref{eq:estimateE1E2E3}) is maximised for $j_x+j_y=0$, so that $i_x+i_y=|P_{\Ext}|$. This means that all extravert vertices are in fact $(r+1)$-extravert. By Claim~\ref{cl:bip3},  
\[|\Obs(P)|\le |P_{\Ext}| (\Delta-\ell) +\ell^2 \le \ell (\Delta-\ell)+\ell^2 \le k \Delta,\]
a contradiction to Claim~\ref{cl:bip_minus1}.
Conversely, if $\Delta-\ell < r$ then 
 the upper bound  on $|\Obs(P)|$ is maximised for $i_x+i_y=0$, so that $j_x+j_y=|P_{\Ext}|$. This means that none of the extravert vertices is $(r+1)$-extravert. By Claim~\ref{cl:bip3}, we again obtain a contradiction to Claim~\ref{cl:bip_minus1}:
\begin{align*}
|\Obs(P)|\le |P_{\Ext}| r +\ell^2 \le \ell (\ell-2) +\ell^2 \le 2 k(k-1).
\end{align*}
In the last line, we used that $r \le \ell-2$, which follows from Claim~\ref{cl:bip7} and the fact that the first and final vertex of $P$ are introvert.
\end{proof}

\bibliographystyle{abbrv}
\bibliography{strongclique}

\end{document}